\newtheorem{theorem}{Theorem}[section]
\newtheorem{remark}{Remark}[section]
\newtheorem{definition}{Definition}[section]
\newtheorem{proposition}{Proposition}[section]
\newtheorem{lemma}{Lemma}[section]
\newtheorem{claim}{Claim}[section]
\DeclarePairedDelimiter\ceil{\lceil}{\rceil}
\DeclarePairedDelimiter\floor{\lfloor}{\rfloor}
\title{Large Deviations Principles for Coulomb gases at intermediate temperature regime}
\author{David Padilla-Garza}
\begin{document}

\maketitle

\begin{abstract}
    This paper deals with Coulomb gases at an intermediate temperature regime. We define a local empirical measure and identify a critical temperature scaling. We show that if the scaling of the temperature is supercritical, the local empirical measure satisfies an LDP with an entropy-based rate function. We also show that if the scaling of the temperature is subcritical, the local empirical measure satisfies an LDP with an energy-based rate function. In the critical temperature scaling regime, we derive an LDP-type result in which the "rate function" features the competition of entropy and energy terms. 
\end{abstract}

\section{Introduction}

Coulomb gases are a system of particles of the same charge that interact via a repulsive kernel, and are confined by an external potential. Let $X_{N}=(x_{1}, x_{2},...x_{N})$ with $x_{i} \in \mathbf{R}^{d}$ and let 
\begin{equation}
    \mathcal{H}_{N}\left( X_{N}\right)=\frac{1}{2}\sum_{i\neq j}g\left( x_{i}-x_{j} \right)+N \sum_{i=1}^{N} V\left( x_{i} \right),
\end{equation}
where
\begin{equation}
\begin{cases}
g(x)=\frac{1}{|x|^{d-2}} \text{ if } d \geq 3 \\
g(x)=-\log(|x|) \text{ if } d = 1,2
\end{cases}
\end{equation}
is the Coulomb kernel for $d \geq 2$, i.e. $g$ satisfies for $d \geq 2$,
\begin{equation}
    \Delta g = c_{d}\delta_{0},
\end{equation} 
where $c_{d}$ is a constant that depends only on $d$. Often, Coulomb gases at non-zero temperature are considered, these are modeled by a point process whose density is given by the Gibbs measure associated to the Hamiltonian:
\begin{equation}\label{Gibbsmeasure}
    d\mathbf{P}_{N,\beta}=\frac{1}{Z_{{N}, \beta}} \exp\left( -  \beta \mathcal{H}_{N} \right) d X_{N},
\end{equation}
where
\begin{equation}
    Z_{N, \beta} = \int_{(\mathbf{R}^{d})^{N}}  \exp\left( - \beta \mathcal{H}_{N} \right) d\, X_{N}.
\end{equation}
In this notation $\beta$ is the inverse temperature (which may depend on $N$).

As long as $\frac{1}{N} \ll \beta,$ we have that the empirical measure
\begin{equation}
    {\rm emp}_{N} := \frac{1}{N} \sum_{i=1}^{N} \delta_{x_{i}}
\end{equation}
converges (weakly in the sense of probability measures) almost surely under the Gibbs measure to $\mu_{V},$ where $\mu_{V}$ is the minimizer of the mean-field limit
\begin{equation}
\label{eq:meanfieldlimit}
    \mathcal{I}_{V}\left( \mu \right) = \frac{1}{2} \iint_{\mathbf{R}^{d} \times \mathbf{R}^{d}} g(x-y) d  \mu(x) \, d \mu(y) + \int_{\mathbf{R}^{d}} V \, d \mu
\end{equation}
among probability measures. 

Coulomb gases have a wide range of applications in Statistical Mechanics and Random Matrix Theory, among other areas, see \cite{borodin2019random, serfaty2017microscopic}  for a more in-depth discussion.

The most fundamental LDP in log gases is found in \cite{arous1997large}. Adapting their results to our setting, it was proved that in the regime $\beta=1$ and $d=1,$ the push-forward of $\mathbf{P}_{N, \beta}$ by ${\rm emp}_{N}$ satisfies an LDP { at speed $N^{2} $} with rate function given by 
\begin{equation}
    \mathcal{F}(\mu) =  \mathcal{I}_{V}\left( \mu \right) -  \mathcal{I}_{V}\left( \mu_{V} \right).
\end{equation}
This result was originally motivated by Hermitian Random Matrix Theory. 

An analogous statement in dimension $2$ was proved in \cite{ben1998large}. In \cite{chafai2014first} the authors deal with a general repulsive interaction $g$ in dimension $d \geq 1.$ In \cite{petz1998logarithmic}, the authors derive an LDP for the eigenvalues of some non-symmetric random matrices. 

As mentioned before, the regime $\beta = \frac{C}{N}$ is substantially different since the empirical measure does not converge to the equilibrium measure. We call this the high temperature regime. Nevertheless, it is possible to identify the limit of the empirical measures as the thermal equilibrium measure:
\begin{equation}
    \mu_{\beta} = \operatorname{argmin}_{\mu}  \mathcal{I}_{V}\left( \mu \right) + \frac{1}{C} \mbox{ent}[\mu],
\end{equation}
where ${\rm ent}$ is given by Definition \ref{def:ent}, and the minimum is taken over probablity measures. Moreover, the push-forward of the Gibbs measure $\mathbf{P}_{N, \beta}$ by the empirical measure satisfies an LDP { at speed $CN$} with rate function
\begin{equation}
     \mathcal{F}(\mu) = \mathcal{I}_{V}\left( \mu \right) + \frac{1}{C} \rm{ent}[\mu] - \left(\mathcal{I}_{V}\left( \mu_{\beta} \right) + \frac{1}{C} \rm{ent}[\mu_{\beta}]  \right).
\end{equation}
This result can be found in \cite{garcia2019large}, which also treats Coulomb gases on compact manifolds.

In our setting, the intensity and sign of the charge of the particles are fixed; in reference, \cite{bodineau1999stationary}, however, the authors also consider the case of the intensity of the charges being a random variable which takes values in $\{-1, 1\}$. Having positive and negative charges implies that there are attractive interactions, which are harder to deal with. 

A widely studied question in Coulomb gases is that of the fluctuations of the difference between ${\rm emp}_{N}$ and $\mu_{V}.$ In order to understand these fluctuations, it is convenient to multiply this difference by a test function $\varphi,$ the resulting object is called the first order statistic:
\begin{equation}
  \mbox{Fluct}_{N}(\varphi) = N \int \varphi \, d\left( {\rm emp}_{N} - \mu_{V} \right).
\end{equation}
In \cite{leble2018fluctuations} it was proved that in two dimensions (under mild technical additional conditions) $\mbox{Fluct}_{N}(\varphi)$ converges in law to a Gaussian random variable with mean 
\begin{equation}
\mbox{mean} =  \frac{1}{2 \pi} \left( \frac{1}{\beta} - \frac{1}{4} \right) \int_{\mathbf{R}^{2}} \Delta \varphi \left( \mathbf{1}_{\Sigma} + (\log \Delta V)^{\Sigma} \right)
\end{equation}
and variance
\begin{equation}
    \mbox{Var} = \frac{1}{ 2 \pi \beta } \int_{\mathbf{R}^{2}} |\nabla \varphi^{\Sigma}|^{2}.
\end{equation}
In this notation, $\Sigma$ is the support of the equilibrium measure, and $g^{\Sigma}$ is the harmonic extension of $g$ outside $\Sigma,$ i.e. the only continuous function which agrees with $g$ in $\Sigma$ up to the boundary and is harmonic and bounded in $\mathbf{R}^{2} \setminus \Sigma.$ A related, very similar result was obtained simultaneously in \cite{bauerschmidt2019two}. Analogous results were obtained in one dimension in \cite{bekerman2018clt} and \cite{lambert2019quantitative}, generalizing the work of \cite{johansson1998fluctuations}, \cite{shcherbina2013fluctuations}, and \cite{borot2013asymptotic}. In \cite{bauerschmidt2017local}, the authors derive local laws and moderate deviation bounds. In \cite{hardy2021clt}, the authors derive a CLT for linear statistics of $\beta-$ensembles at high temperature, in this reference, the authors also derive concentration inequalities. In \cite{serfaty2020gaussian}, the author deals with linear statistics replacing $\mu_{V}$ with the thermal equilibrium measure.  All of the references just mentioned, except \cite{serfaty2020gaussian} and \cite{hardy2021clt} deal with $\beta$ proportional to $N^{1 - \frac{2}{d}}$ for $d \geq 2$ (in our notation), or $\beta$ constant in $d=1$. This paper wanders into the mainly unexplored territory of Coulomb gases at general temperature regimes and high dimensions. 

Coulomb gases are also widely studied due to their connection with Random Matrix Theory, see \cite{bourgade2014universality, bourgade2014local, bourgade2014edge, bourgade2012bulkb}for recent developments. Most of the problems studied in connection with Random Matrix Theory are in dimensions $1$ and $2$, therefore the result in this paper (which holds only in dimensions $3$ and higher) is not applicable to that setting. However, the main result in this paper is applicable to quaternionic Gaussian ensembles.  

Since the equilibrium measure typically has compact support,  there are $N$ particles in a bounded domain in $\mathbf{R}^{d},$ and so, typically the particles are at distance $N^{-\frac{1}{d}}$ of each other. After applying a dilation of magnitude $N^{\frac{1}{d}}$ to Euclidean space, one observes individual particles. An LDP { at speed $N$} for Coulomb gases at this scale was obtained in \cite{leble2017large1}, and the rate function combines two terms: one comes from the Hamiltonian and the other one is related to entropy. Similar results were obtained in \cite{hardin2018large} for hyper-singular Riesz gases, and in \cite{leble2017large2} for two-component plasmas.

Details of the convergence of ${\rm emp}_{N}$ to $\mu_{V}$ were obtained in \cite{chafai2018concentration}. In this reference, the authors also study the relation between the electric energy and norms on probability measures.
One of their results concerning the convergence of ${\rm emp}_{N}$ to $\mu_{V}$ is the following: 
If $\beta>0$, then under mild additional assumptions, there exist
constants $u,v > 0$ depending on $\beta$ and $V$ only such that, for any $N \geq 2$ and
\begin{equation}
    r \geq \begin{cases}
    v \sqrt{\frac{\log N}{N}} \mbox{ if } d=2\\
    v N^{-\frac{1}{d}} \mbox{ if } d>2
    \end{cases}
\end{equation}
we have
\begin{equation}
    \mathbf{P}_{N,\beta} (W_{1}(\mu, \nu) \geq r) \leq \exp(- u N^{2} r^{2}),
\end{equation}
where $W_{1}$ is the Wasserstein distance (see \cite{chafai2018concentration}).

The main contribution of this paper is to clarify the
relationship between temperature scales and length scales for the mesoscopic behavior of particle systems with Coulomb interactions. The way to do this is to look at rare events at a mesoscale and understand them by means of an LDP. An important idea in this work is to exploit the different scaling relations satisfied by the Coulomb energy and the entropy. This work also exploits the smearing technique, used for example in \cite{chafai2018concentration, leble2017large1,rougerie2016higher,sandier20152d}. A large part of this work is devoted to simplifying expressions for partition functions, derived via a variational characterization. 

At a macroscopic scale, it has been well-known that the temperature and the energy compete if the energy is of order $\beta = \frac{1}{N}$, in the sense that the empirical field (the macroscopic observable) satisfies an LDP that involves energy and entropy terms. At a microscopic scale, it was recently proved in \cite{leble2017large1} that the energy and the entropy compete if the temperature is of order $\beta = N^{\frac{2-d}{d}}$ (for $d \geq 3$), in the sense that the tagged empirical field (the microscopic observable) satisfies an LDP that involves energy and entropy terms. This raises the natural question: given a length scale between the macroscopic and microscopic, is there a temperature regime in which the temperature and entropy compete (in the sense that there is an LDP containing energy and entropy terms)? This paper answers this question. 

Given a length scale, we will identify a critical temperature regime. Of course, this problem is equivalent to identifying a critical length scale given a temperature regime. This last approach was taken in \cite{armstrong2019local}. However, despite analyzing the interplay between temperature and length scale, this work and \cite{armstrong2019local} are pretty much independent. \cite{armstrong2019local} deals with the tagged empirical field: an observable obtained by averaging the empirical field over a certain region. This observable is fundamentally different from the local empirical measure. Furthermore, the $\rho_{\beta}$ identified in \cite{armstrong2019local} does not coincide with the "critical length scale" identified in our work. The main results in this work and in \cite{armstrong2019local} are independent:  our results do not follow from \cite{armstrong2019local}, and the results in \cite{armstrong2019local} do not follow from our results. The techniques used are also fundamentally different. This work is not an attempt to prove any conjecture in \cite{armstrong2019local}. 

A significant part of this work is devoted to computing, with high precision, certain partition functions. This is similar to  obtaining a Laplace principle, as in \cite{garcia2019large}. However, the techniques in \cite{garcia2019large} would only allow us to obtain the leading order term in the partition function. This would be greatly insufficient to conclude, and so it is necessary to take a different approach in order to identify the next-order terms. 

\section{Main definitions and statement of main results}

This section defines the most important objects for the rest of the paper and states the main results.

We begin with definitions related to the empirical measure. 

\begin{definition}
Given $X_{N} \in \mathbf{R}^{d \times N},$ with 
\begin{equation}
    X_{N} = (x_{1}, ...x_{N}),
\end{equation}
we denote
\begin{equation}
    {\rm emp}_{N}(X_{N}) = \frac{1}{N} \sum_{i=1}^{N} \delta_{x_{i}}.
\end{equation}
In order to make the notation more clear, we will often write 
\begin{equation}
    {\rm emp}_{N}
\end{equation}
instead of
\begin{equation}
    {\rm emp}_{N}(X_{N}).
\end{equation}

Given $x \in \mathbf{R}^{d}$ and $R \in \mathbf{R}^{+}$ we denote by 
\begin{equation}
    \square(x,R)=\left(-\frac{R}{2},\frac{R}{2} \right)^{d}+x.
\end{equation}
We will also use the notation
\begin{equation}
    \square_{R}=\square(0,R).
\end{equation}
Let 
\begin{equation}
    x_{i}^{\lambda}=N^{\lambda}x_{i}.
\end{equation}
We now define the main observable of this paper: the \emph{local empirical measure}
\begin{equation}
    {\rm lemp}_{N}^{\lambda}(X_{N})=\frac{1}{N^{1-\lambda d}} \sum_{i=1}^{N} \delta_{x_{i}^{\lambda}}|_{\square_{R}}.
\end{equation}
Even though  ${\rm lemp}_{N}^{\lambda}(X_{N})$ depends on $\lambda$ and $X_{N}$, we will sometimes omit this dependence for ease of notation and simply write ${\rm lemp}_{N}$. Note that ${\rm lemp}_{N}$ is a measure with support contained in $\square_{R},$ and with mass which we expect remains bounded if $X_{N}$ is distributed according to $\mathbf{P}_{N, \beta}$.
\end{definition}

This paper deals with the empirical measure at a mesoscopic scale, i.e. at a scale $N^{-\lambda},$ where 
\begin{equation}
   \lambda \in \left(0,\frac{1}{d}\right). 
\end{equation}
We choose the name mesoscopic because the scale $\lambda=0$ is macroscopic, while the scale $\lambda = \frac{1}{d}$ is microscopic, i.e. a scale which is of the same order of magnitude as the distance between particles. Without loss of generality, we assume that we blow up around the origin. For the general case, we may simply consider a modified potential. 

The idea is to define a mesoscopic observable. This definition is inspired by interpolating between the empirical measure: $\frac{1}{N}\sum_{i=1}^{N} \delta_{x_{i}}$, and the empirical field: $\sum_{i=1}^{N} \delta_{x_{i}^{N^{\frac{1}{d}}}}$. The factor of $N^{\lambda}$ in the dilation corresponds to a mesoscale, while the normalizing factor of $N^{1 - \lambda d}$ is necessary to obtain a bounded nonzero quantity. Note that the local empirical measure is, in general, not a probability measure but only a positive measure. The local empirical measure is more similar to the empirical measure in the sense that it converges to a continuous measure. 

We now define the most basic functionals used in the paper: energy and entropy. We also define a few modifications of the functionals which will be used throughout the paper. 

\begin{definition}
\label{def:ent}
Given two measures $\mu$ and $\rho$, the relative entropy ${\rm ent}[\mu | \rho ]$ is defined as 
\begin{equation}
{\rm ent}[\mu | \rho ]=
    \begin{cases}
    \int  \log \left( \frac{d\mu}{d\rho} \right) \, d{\mu} \quad  if \mu \ll \rho \\
    \infty\quad  \text{ o.w.}
    \end{cases}
\end{equation}

The entropy of a measure ${\rm ent}[\mu]$ is defined as ${\rm ent}[\mu|\mathcal{L}]$, where $\mathcal{L}$ denotes the Lebesgue measure on $\mathbf{R}^{d}.$

In the remainder of the paper, we commit the abuse of notation of not distinguishing between a measure and its density. 
\end{definition}

\begin{definition}
The electric energy of a measure $\nu$ is defined as 
\begin{equation}
    \mathcal{E}(\nu)=\iint_{\mathbf{R}^{d} \times \mathbf{R}^{d}} g(x-y) d  \nu(x) \, d \nu(y),
\end{equation}
and
\begin{equation}
    \mathcal{E}^{\neq}(\nu)=\iint_{\mathbf{R}^{d} \times \mathbf{R}^{d} \setminus \Delta} g(x-y) d  \nu(x) \, d \nu(y),
\end{equation}
where
\begin{equation}
    \Delta = \{ (x,x) \in \mathbf{R}^{d} \times \mathbf{R}^{d} \}.
\end{equation}
Given a measurable set $\Omega \subset \mathbf{R}^{d}$, we will also use the notation 
\begin{equation}
    \mathcal{E}^{\neq}_{\Omega}(\nu)=\iint_{\mathbf{R}^{d} \times \mathbf{R}^{d} \setminus \Delta_{\Omega}} g(x-y) d  \nu(x) \, d \nu(y),
\end{equation}
where
\begin{equation}
    \Delta_{\Omega} = \{(x,x) \in \Omega \times \Omega \}.
\end{equation}
\end{definition}

\begin{definition}
We define the free energy of a measure as 
  \begin{equation}
     \mathcal{E}_{\beta}\left( \mu \right)=  \mathcal{I}_{V}\left( \mu \right)+\frac{1}{N\beta}\int_{\mathbf{R}^{d}} \mu \log \left( \mu \right).
 \end{equation}
 
We define the thermal equilibrium measure $\mu_{\beta}$ as 
\begin{equation}
     \mu_{\beta}=\operatorname{argmin}_{\mu \in \mathcal{P}(\mathbf{R}^{d})} \mathcal{E}_{\beta}(\mu),
\end{equation}
where $\mathcal{P}(\mathbf{R}^{d})$ denotes the set of probability measures on $\mathbf{R}^{d}$. More generally, we will use the notation $\mathcal{P}(\Omega)$ for the set of probability measures on $\Omega \subset \mathbf{R}^{d}$.  

For existence, uniqueness and basic properties of $\mu_{\beta}$ see \cite{armstrong2019thermal}. 

We also define the equilibrium measure $\mu_{V}$ by 
\begin{equation}
     \mu_{V}=\operatorname{argmin}_{\mu \in \mathcal{P}(\mathbf{R}^{d})} \mathcal{I}_{V}\left( \mu \right),
\end{equation}
where $\mathcal{I}_{V}$ is given by equation \eqref{eq:meanfieldlimit}. 

For existence, uniqueness and basic properties of $\mu_{V}$ see for example \cite{serfaty2018systems} and references therein. 
\end{definition}

We proceed with a few definitions regarding measures. 

\begin{definition}
Given a measurable set $\Omega \subset \mathbf{R}^{d}$, we denote $\mathcal{M}(\Omega)$ the space of measures on $\Omega$ which are either of bounded variation or have a definite sign. We also define, for any $\mu \in \mathcal{M}(\Omega)$ 
\begin{equation}
    |\mu| = \mu (\Omega).
\end{equation}
\end{definition}

\begin{definition}
Given a measurable set $\Omega$, and a measure $\mu$ on $\Omega$, we define the bounded Lipschitz norm of $\mu$, denoted $\|\mu\|_{BL}$ as 
\begin{equation}
    \|\mu\|_{BL} = \sup_{f \in {\rm Lip}_{1}(\Omega)} \int_{\mathbf{R}^{d}} f \, d\mu,
\end{equation}
where ${\rm Lip}_{1}(\Omega)$ denotes the set of Lipschitz functions on $\Omega$ whose absolute value is bounded by $1$, and Lipschitz constant is also smaller than $1$. 

Unless otherwise specified, any distance between measures will refer to the bounded Lipschitz norm. In particular, given $\epsilon>0$ we define 
\begin{equation}
    B(\nu, \epsilon) = \{ \mu \in \mathcal{M}(\Omega) | \| \mu - \nu \|_{BL} \leq \epsilon \}.
\end{equation}

We recall that the bounded Lipschitz norm metricizes the topology of weak convergence. 
\end{definition}

\begin{definition}
Let $\mu,\nu \in \mathcal{M}^{+}(\Omega),$ we define
\begin{equation}
    \mathcal{N}[\mu|\nu] = {\rm ent}[\mu| \nu] + |\nu| - |\mu|,
\end{equation}
where $\mathcal{M}^{+}(\Omega)$ denotes the set of positive measures on a set $\Omega$.
\end{definition}

We will now introduce the rate functions for the different LDP's. These rate functions are based on the entropy functional, the energy functional, or both.

\begin{definition}
Given a domain $\Omega \subset \mathbf{R}^{d}$ and a scalar $\alpha \in \mathbf{R}^{+},$ we define the function $\Phi^{\alpha}_{\Omega},$ defined for an absolutely continuous measure $\mu$ on $\Omega$ as 
\begin{equation}
   \Phi^{\alpha}_{\Omega}(\mu)= \inf_{\rho: \mathbf{R}^{d} \setminus \Omega \to \mathbf{R}^{+}} \iint_{\mathbf{R}^{d} \times \mathbf{R}^{d}} g(x-y) ( \mu(x)+\rho(x) - \alpha)dx ( \mu(y)+\rho(y) - \alpha) dy.
\end{equation}
\end{definition}

\begin{definition}
\label{defT}
Given a measure $\nu$, we define the measure $\nu^{\tau}$ as 
\begin{equation}
    \nu^{\tau}(\Omega) = \tau^{d} \nu (\tau^{-1} \Omega).
\end{equation}
For a measure $\mu$ defined on $\Omega,$ we denote by 
\begin{equation}
\label{defofTN}
    \mathbf{T}_{\lambda}^{N}(\nu) = \inf_{\rho \in \mathcal{M}^{+} \mathbf{R}^{d} \setminus \Omega} \left( \frac{1}{2}\mathcal{E}\left( \nu + \rho - \mu_{\beta}^{N^{\lambda}} \right) - \int_{\mathbf{R}^{d}} \log\left( \mu_{\beta}^{N^{\lambda}} \right) d\rho + {\rm ent}[\rho] \right),
\end{equation}
where the infimum is taken over $\rho$ such that
\begin{equation}
    \int_{\mathbf{R}^{d}} \nu + \rho - \mu_{\beta}^{N^{\lambda}}=0.
\end{equation}

 We also define
\begin{equation}
    \mathcal{T}_{\lambda}^{N}(\nu) =  \mathbf{T}_{\lambda}^{N}(\nu) + {\rm ent}[\nu| \mu_{V}(0)\mathbf{1}_{\Omega}].
\end{equation}
\end{definition}

In this paper, we deal with general a general potential $V$. We only impose some regularity and growth conditions, which we make precise in the next definition.

\begin{definition}
We call a potential $V: \mathbf{R}^{d} \to \mathbf{R}$, with $d \geq 3$ {admissible} if:
\begin{itemize}
    \item[1.]  $V \in C^{2}$.
    
    \item[2.] 
    \begin{equation}
    \lim_{x \to \infty} V(x) = \infty.    
    \end{equation}
    
    \item[3.] 
    \begin{equation}
    \int_{|x| > 1} \exp \left( -\alpha V (x) \right) \, dx < \infty    
    \end{equation}
    for all $\alpha>0$. 
    
    \item[4.] 
    \begin{equation}
    \Delta V \geq \alpha >0    
    \end{equation}
    for some $\alpha$, in a neighborhood of $\Sigma$, defined as the support of $\mu_{V}$. 
    
    \item[5.] $V|_{\Sigma} \in C^{\infty}(\Sigma)$. 
    
    \item[6.] $0 \in {\rm int}(\Sigma)$. 
\end{itemize}
\end{definition}

\begin{remark}
    If $V$ is admissible, the equilibrium measure $\mu_{V}$ is bounded and has compact support, see \cite{serfaty2018systems}.
\end{remark}

Finally, before stating the main result, we recall the definition of rate function and Large Deviations Principle (LDP). 

\begin{definition}
(Rate function) Let $X$ be a metric space (or a topological space). A rate function is a l.s.c. function $I:X \to [0,\infty],$ it is called a good rate function if its sublevel sets are compact.
\end{definition}

\begin{definition}[LDP]
 Let $P_{N}$ be a sequence of Borel probability measures on $X$ and $a_{N}$ a sequence of positive reals such that $a_{N} \to \infty.$ Let $I$ be a good rate function on $X.$ The sequence $P_{N}$ is said to satisfy a Large Deviations Principle (LDP) at speed $a_{N}$ with (good) rate function $I$ if for every Borel set $E \subset X$ the following inequalities hold:
\begin{equation}
    - \inf_{E^{\mathrm{o}}} I \leq \liminf_{N \to \infty} \frac{1}{a_{N}} \log \left( P_{N}(E) \right) \leq \limsup_{N \to \infty} \frac{1}{a_{N}} \log \left( P_{N}(E) \right) \leq  - \inf_{\overline{E}} I,
\end{equation}
where $E^{\mathrm{o}}$ and $\overline{E}$ denote respectively the interior and the closure of a set $E.$
Formally, this means that $P_{N}(E) \simeq \exp(-a_{N} \inf_{{E}} I).$
\end{definition}

The main result of this paper is the following theorem:
\begin{theorem}\label{maintheorem}
Assume that $d \geq 3$ and the potential $V$ are admissible. Let $\beta = N^{-\gamma}$ with $\gamma \in (\frac{d-2}{d}, 1).$ Assume that $\mu_{V}$ is bounded away from $0$ inside its support. Let 
\begin{equation}
    \gamma^{*}=1-2\lambda,
\end{equation}
and assume that
\begin{equation}
    \lambda < \frac{1}{d(d+2)}.
\end{equation}
Then:
\begin{itemize}
      \item[$\bullet$] If $\gamma<\gamma^{*}$ (subcritical regime) then the push-forward of $\mathbf{P}_{N,\beta}$ by ${\rm lemp}_{N}$ satisfies an LDP in the topology of weak convergence at speed $\beta N^{2-(d+2)\lambda}$ and rate function
    \begin{equation}
        \frac{1}{2}\Phi^{\mu_{V}(0)}_{\square_{R}}(\mu).
    \end{equation}
    \item[$\bullet$] If $\gamma>\gamma^{*}$ (supercritical regime) then the push-forward of $\mathbf{P}_{N, \beta}$ by ${\rm lemp}_{N}$ satisfies an LDP in the topology of weak convergence at speed $N^{1-\lambda d}$ and rate function
    \begin{equation}
        \mathcal{N}[\mu|\mu_{V}(0) \mathbf{1}_{\square_{R}}] .
    \end{equation}
     \item[$\bullet$] If $\gamma=\gamma^{*}$ (critical regime) and $\nu \in L^{\infty}$ then
     \begin{equation}
         \lim_{\epsilon \to 0} \limsup_{N \to \infty} \left( \frac{1}{\beta N^{2 - \lambda(d+2)} } \log \left( \mathbf{P}_{N,\beta} ({\rm lemp}_{N} \in {B}(\nu, \epsilon)) \right) + \mathcal{T}^{N}_{\lambda} (\nu) \right) = 0.
     \end{equation}
     Similarly,
     \begin{equation}
         \lim_{\epsilon \to 0} \liminf_{N \to \infty} \left( \frac{1}{\beta N^{2 - \lambda(d+2)} } \log \left( \mathbf{P}_{N,\beta} ({\rm lemp}_{N} \in  {B}(\nu, \epsilon)) \right) + \mathcal{T}^{N}_{\lambda} (\nu) \right) = 0.
     \end{equation}
\end{itemize}
\end{theorem}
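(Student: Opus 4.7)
The plan is to exploit the splitting formula for the Coulomb gas Hamiltonian around the thermal equilibrium measure $\mu_\beta$, together with the different scaling behaviour of the Coulomb energy and the entropy under the mesoscopic dilation $x \mapsto N^\lambda x$. Using the Euler--Lagrange equation for $\mu_\beta$ to substitute for the potential $V$, one obtains the identity
\[
\beta \mathcal{H}_N(X_N) = \beta \mathcal{E}^{\neq}\!\left(\textstyle\sum_i \delta_{x_i} - N\mu_\beta\right) - \sum_i \log \mu_\beta(x_i) + c_{N,\beta},
\]
which realises the Gibbs measure as $N$ independent samples from $\mu_\beta$ tilted by the exponential of the fluctuation Coulomb energy. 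Applied to $lemp_N^\lambda$, this decomposition isolates two competing mechanisms: a Sanov-type entropic contribution coming from the product-measure piece, which scales with the typical particle count $N^{1-\lambda d}$ in $\square_R$ and produces a rate of relative-entropy type; and an energetic contribution from the tilt which, after the rescaling $g(N^{-\lambda}x)=N^{\lambda(d-2)}g(x)$, is captured by $\Phi^{\mu_V(0)}_{\square_R}(\mu)$ at scale $N^{2-(d+2)\lambda}$.

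Having obtained the splitting, I would proceed as follows. First, approximate $\mu_\beta^{N^\lambda}$ on $\square_R$ by its pointwise value $\mu_V(0)$, which is legitimate under the assumption that $\mu_V$ is bounded and bounded below on its support, and quantify the resulting error. Second, express the local part of the fluctuation energy in terms of $\Phi^{\mu_V(0)}_{\square_R}$, absorbing the interaction with the charges outside $\square_R$ into the minimisation over the extension $\varphi$ that appears in the definition of $\Phi$. Third, prove the LDP upper bound by combining the splitting with the concentration estimate of \cite{chafai2018concentration} to discard atypical configurations; and prove the matching lower bound by exhibiting reference configurations realising a prescribed profile $\mu$, obtained either by a controlled perturbation of a typical sample from $\mu_\beta$ (in the entropy-driven regime) or by an explicit microscopic placement matching the required density (in the energy-driven regime).

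The three regimes then emerge by comparing which of the two costs survives in the limit once multiplied by $\beta = N^{-\gamma}$; they balance exactly at $\gamma = \gamma^* = 1-2\lambda$. For $\gamma > \gamma^*$ only the energetic contribution is of leading order, giving the LDP at speed $N^{2-(d+2)\lambda}$ with rate $\Phi^{\mu_V(0)}_{\square_R}$; for $\gamma < \gamma^*$ only the entropic contribution survives, giving the LDP at speed $N^{1-\lambda d}$ with rate $\mathcal{N}[\mu|\mu_V(0)\mathbf{1}_{\square_R}]$; at criticality both terms are of the same order and combine into $\mathcal{T}^N_\lambda$. In the critical case the rate retains an $N$-dependence through $\mu_\beta^{N^\lambda}$, and the weak topology is insufficient to control $\mathcal{E}^{\neq}$ continuously, which forces the statement to be formulated on the restricted sets $\overline{B}^k(\mu,\epsilon)$ imposing both a buffer of order $k/N^{1/d}$ from $\partial \square_R$ and a direct control on $\mathcal{E}^{\neq}(\mu-\nu)$.

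The principal obstacle will be the critical regime, where neither of the two competing contributions can be neglected and one has to keep track of all cross terms between the entropy and the Coulomb energy while still working with the $N$-dependent reference $\mu_\beta^{N^\lambda}$. A secondary but essential difficulty is the lower bound in the non-critical regimes: constructing a configuration with the prescribed local profile while ensuring that the other (subdominant) rate function is not activated requires a quantitative mesoscopic equidistribution result, and the condition $\lambda < \tfrac{1}{d(d+2)}$ is precisely the threshold at which the Taylor-expansion error in the approximation $\mu_\beta^{N^\lambda}\approx \mu_V(0)$, the boundary-layer error near $\partial\square_R$, and the fluctuation error all remain below the two speeds in play.
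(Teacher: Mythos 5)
Your high-level strategy --- splitting around the thermal equilibrium measure $\mu_{\beta}$, using the different scaling exponents of Coulomb energy ($x^{d+2}$) and entropy ($x^{d}$) to identify $\gamma^{*}=1-2\lambda$, and then approximating $\mu_{\beta}^{N^{\lambda}}$ by $\mu_{V}(0)$ on $\square_{R}$ --- is essentially the route the paper takes. Your identity for $\beta\mathcal{H}_{N}$ is exactly the rewritten Gibbs measure the paper derives from Proposition~\ref{thermalsplittingformula} and the definition of $K_{N,\beta}$. However, there are two genuine gaps in what you propose.

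First, for the LDP upper bound you suggest ``combining the splitting with the concentration estimate of \cite{chafai2018concentration} to discard atypical configurations.'' The paper does not use that concentration result here, and it would not suffice: the Chafa\"i--Hardy--Ma\"ida bound controls the global Wasserstein deviation of $emp_{N}$ from $\mu_{V}$, not the energy cost of forcing a prescribed local profile inside the mesoscopic cube. What the paper does instead is bound the fluctuation Coulomb energy $\mathcal{E}^{\neq}(emp_{N}-\mu_{\beta})$ from below by the obstacle-type minimum $\Phi_{\square_{R},N^{1-\lambda d}}^{N,\mu_{\beta}^{N^{\lambda}}}$ (minimizing over all admissible exterior configurations), and then handles the product-measure factor by Sanov separately; the two pieces factor because the first is a uniform lower bound over the ball $B(\nu,\epsilon)$. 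You gesture at this (``absorbing the interaction with the charges outside $\square_{R}$ into the minimisation over the extension''), but you would need to make that the primary mechanism, not a concentration bound.

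Second, and more seriously, your plan for the LDP lower bound is to ``exhibit reference configurations realising a prescribed profile $\mu$.'' That handles the interior points, but you must still integrate over the exterior points $Z_{N}\in(\mathbf{R}^{d}\setminus\square_{RN^{-\lambda}})^{j_{N}}$, i.e.\ compute the conditional partition function
\[
Z_{N,\beta}^{Y_{N}}=\int \exp\left(-\beta\,\mathcal{H}_{N}(Y_{N},Z_{N})\right)dZ_{N}
\]
and show that $-\log Z_{N,\beta}^{Y_{N}}$ is bounded \emph{above} by the claimed rate uniformly over interior configurations $Y_{N}$ with $lemp_{N}$ near $\mu$. This is the content of Lemma~\ref{boundonZYN} in the paper, which proceeds via the free-energy variational characterization of $Z_{N,\beta}^{Y_{N}}$ and the thermal splitting relative to the modified potential $\widetilde V=V+emp_{N}'(Y_{N})\ast g$. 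Without this step, your ``reference configurations'' give you the Sanov/volume factor and the local energy, but not the matching exterior contribution, so the lower bound does not close. You would also need the explicit construction of Lemma~\ref{goodenergygoodvolume}, which simultaneously controls the volume (Sanov) and the smeared energy $\mathcal{E}^{\neq}(emp_{N}-\nu)$ of the interior configurations, with the tube width $\tau\sim\overline\eta\, n_j^{-1/d}$ chosen so both errors vanish; ``controlled perturbation of a typical sample from $\mu_{\beta}$'' will not give you the energy control you need in the energy-dominated regime.

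In summary: same decomposition, same critical exponent, same approximation of the background, but you are missing the variational control of $Z_{N,\beta}^{Y_{N}}$ (essential for the lower bound) and you propose an upper-bound mechanism (Chafa\"i concentration) that the paper does not use and that would not yield the local $\Phi$ rate.
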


\begin{remark}
    The rate functions have the same minimizer in all cases: $\mu_{V}(0) \mathbf{1}_{\square_{R}}$. This is clear, since in this temperature regime, the empirical measure concentrates on the thermal equilibrium measure for all scales larger than $N^{-\frac{1}{d}}$, as was proved in \cite{padilla2023concentration}. The typical event, therefore, is trivial; and it is a rare event that deserves to be looked at. 
\end{remark}

\begin{remark}
    Nearly all hypotheses in Theorem \ref{maintheorem} are essential. The hypothesis that $\lambda < \frac{1}{d(d+2)}$, however is not. It is an artifact of the proof, and it is needed to bound a specific error term. Bounding this error term is necessary if one uses the regularization procedure, i.e. it is needed to bound the difference between the energy of a discrete probability measure and a continuous one. This technique is very common in the field. We expect that a similar result will be true for $\lambda \geq \frac{1}{d(d+2)}$, but proving this would require essentially different techniques. 
\end{remark}

\begin{remark}
It is natural to ask if there is an analog of Theorem \ref{maintheorem} in the extreme cases 
\begin{equation}
    \beta=N^{-1}, \lambda=0 \quad\text{and} \quad \beta=N^{-\frac{d-2}{d}}, \lambda=\frac{1}{d}.
\end{equation}
In the case 
\begin{equation}
    \beta=N^{-1}, \lambda=0, 
\end{equation}
Theorem \ref{maintheorem} has a very natural generalization, as mentioned in the introduction. It was proved in \cite{garcia2019large} that ${\rm emp}_{N}$ satisfies an LDP at speed $N$ with rate function 
\begin{equation}
     \mathcal{F}(\mu) = \mathcal{I}_{V}\left( \mu \right) + \rm{ent}[\mu] - \left(\mathcal{I}_{V}\left( \mu_{\beta} \right) + \rm{ent}[\mu_{\beta}]  \right).
\end{equation}

The case 
\begin{equation}
 \beta=N^{-\frac{d-2}{d}}, \lambda=\frac{1}{d}
\end{equation}
is substantially different, because at a microscopic scale, we do not observe a continuous distribution but rather individual particles. A similar problem was treated in \cite{leble2017large1}. Even though the result is substantially different, it has a similar flavor, since the authors prove an LDP { at speed $N$} in which the rate function contains the sum of an entropy term and an electric energy term. 
\end{remark}

\section{Additional definitions}

We proceed with a few additional definitions related to the empirical measure.
\begin{definition}
Let $R \in \mathbf{R}^{+}$ be fixed, and $X_{N} \in \mathbf{R}^{d \times N}.$ We define $y_{i}, z_{j}$ such that
\begin{equation}
    X_{N}=(y_{1}, y_{2}...y_{i_{N}}, z_{1}, z_{2},...z_{j_{N}}),
\end{equation}
where
\begin{equation}
    y_{k} \in \square_{\frac{R}{N^{\lambda}}}, \quad z_{k} \notin \square_{\frac{R}{N^{\lambda}}},
\end{equation}
and 
\begin{equation}
    i_{N}+j_{N}=N.
\end{equation}
Let 
\begin{equation}
    Y_{N}=(y_{1},...y_{i_{N}})
\end{equation}
and
\begin{equation}
    {\rm emp}_{N}'(Y_{N})=\frac{1}{N} \sum_{k=1}^{i_{N}} \delta_{y_{k}}.
\end{equation}
Similarly, let
\begin{equation}
    Z_{N}=(z_{1},...z_{i_{N}}).
\end{equation}

\end{definition}

\begin{definition}
\label{addenergydef}
Given an integer $M,$ and $\epsilon>0,$ we denote by $\mathcal{A}_{M}^{\epsilon}(\Omega)$ the set of measures which are purely atomic with weight $\epsilon,$ i.e.
\begin{equation}
    \mathcal{A}_{M}^{\epsilon}(\Omega)=\{ \mu \in \mathcal{M}^{+}(\Omega) | \mu = \epsilon \sum_{i=1}^{M} \delta_{x_{i}}\}.
\end{equation}

Given a measure $\mu \in \mathcal{M}^{+}(\mathbf{R}^{d}),$ an integer $M,$ a region $\Omega \subset \mathbf{R}^{d}$ and $\epsilon>0,$ we define
\begin{equation}\label{definitiongeneralPhi}
    \mathbf{W}_{\Omega, \epsilon}^{M, \mu}(\nu)=\inf_{\rho \in \mathcal{A}_{M}^{\epsilon}(\mathbf{R}^{d} \setminus \Omega)} \mathcal{E}^{\neq}(\nu + \rho - \mu),
\end{equation}
where $\nu \in \mathcal{M}^{+}(\Omega).$

We also define
\begin{equation}\label{defofT}
    \mathbf{T}_{\lambda}^{N,\neq} (\nu) =\\
    \inf_{\rho \in \mathcal{M}^{+} \mathbf{R}^{d} \setminus \Omega} \Bigg( \frac{1}{2}\mathcal{E}^{\neq}_{\square_{R}}\left( \nu + \rho - \mu_{\beta}^{N^{\lambda}} \right) -\int_{\mathbf{R}^{d}} \log\left( \mu_{\beta}^{N^{\lambda}} \right) d\rho + \mbox{ent}[\rho] \Bigg),
\end{equation}
where the infimum is taken over $\rho$ such that
\begin{equation}
    \int_{\mathbf{R}^{d}} \nu + \rho - \mu_{\beta}^{N^{\lambda}}=0.
\end{equation}
The definition of $\mathbf{T}_{\lambda}^{N,\neq}$ is almost the same as $\mathbf{T}_{\lambda}^{N}$ but omitting the diagonal inside the square $\square_{R}$ in the computation of the Coulomb energy. This modification allows for the quantity to be finite for atomic measures inside the cube. 

Given $\alpha \in \mathbf{R}^{+}$ we also define
\begin{equation}\label{defofPhibeta}
    \mathbf{F}_{\square_{R}}^{\alpha}(\nu)=\inf_{\rho \in \mathcal{M}^{+}(\mathbf{R}^{d} \setminus \square_{R})} \mathcal{E}(\nu+\rho - \alpha),
\end{equation}
where the inf is taken over all $\rho \in C^{\infty}$ such that 
\begin{equation}\label{massconstrainteq}
    \int_{\mathbf{R}^{d}} \rho + \nu - \alpha \,  dx =  0. 
\end{equation}

We generalize the definition of $ \Phi_{\Omega}^{\alpha}(\nu)$ to a more general setting in which the background measure is not necessarily constant out of $\Omega$. Given a set $\Omega \in \mathbf{R}^{d}$ and a background measure $\mu \in \mathcal{M}(\mathbf{R}^{d}),$ 
\begin{equation}
    \Phi_{\Omega}^{\mu}(\nu)=\inf_{\rho \in \mathcal{M}^{+}(\mathbf{R}^{d} \setminus \Omega)} \mathcal{E}(\nu+\rho-\mu).
\end{equation}

We now define an analog of $\Phi^{\mu}_{\Omega}$ for measures that are not absolutely continuous. Given a measurable set $\Omega \subset \mathbf{R}^{d}$, a positive measure $\mu$ on $\mathbf{R}^{d}$, $ \Phi_{\Omega, \neq}^{\mu}(\nu)$ is defined for a measure $\nu$ on $\Omega$ as
\begin{equation}
    \Phi_{\Omega, \neq}^{\mu}(\nu)=\inf_{\rho \in \mathcal{M}^{+}(\mathbf{R}^{d} \setminus \Omega)} \mathcal{E}^{\neq}_{\Omega}(\nu+\rho-\mu).
\end{equation}

We also introduce the notation.
\begin{equation}
    \mathcal{G}(\mu, \nu) = \iint_{\mathbf{R}^{d} \times \mathbf{R}^{d}} g(x-y) d\mu({x}) d\nu({y}).
\end{equation}

\end{definition}

\begin{remark}
  Note that 
\begin{equation}
    ({\rm emp}_{N}'(Y_{N}))^{N^{\lambda}}={\rm lemp}_{N}.
\end{equation}

Note also that for any $\alpha \in \mathbf{R}^{+}$, $\mathcal{E}$ has the scaling relation
\begin{equation}
    \mathcal{E}(\mu^{\alpha})=\alpha^{d+2} \mathcal{E}(\mu),
\end{equation}
and therefore $ \mathbf{W}^{M, \mu}_{\Omega, \epsilon}(\nu)$ has the scaling relation
\begin{equation}
    \mathbf{W}^{M, \mu}_{\Omega, \epsilon}(\nu)=\alpha^{-(d+2)}  \mathbf{W}^{M, \mu^{\alpha}}_{\alpha\Omega, \alpha^{d} \epsilon}(\nu^{\alpha}).
\end{equation}

\end{remark}

Lastly, we introduce notation that will be used throughout the work. 

\begin{remark}[Notation]
\label{def:lamb}
Given $\epsilon > 0$, we denote by $\lambda_{\epsilon}$ the uniform probability measure on $\partial B(0,\epsilon)$.
\end{remark}

\section{Preliminary results}

In this section, we will prove some preliminary results needed for the main Theorem.  

We begin with a splitting formula around the thermal equilibrium measure, which is an analog of the usual splitting formula (see for example \cite{sandier20152d}).
\begin{proposition}\label{thermalsplittingformula}
 The Hamiltonian $\mathcal{H}_{N}$ can be split into:
 \begin{equation}
 \label{eq:splittingform}
         \mathcal{H}_{N}\left( X_{N} \right)=N^{2}\mathcal{E}_{\beta}\left( \mu_{\beta} \right)+N\sum_{i=1}^{N} \zeta_{\beta} \left( x_{i} \right) +\frac{N^{2}}{2} \mathcal{E}^{\neq}\left( {\rm emp}_{N}-\mu_{\beta} \right) ,
 \end{equation}
 where
 \begin{equation}
     \zeta_{\beta}=-\frac{1}{N\beta} \log\left( \mu_{\beta} \right). 
 \end{equation}
\end{proposition}

\begin{proof}
See \cite{armstrong2019local}.
\end{proof}

\begin{definition}
In analogy with previous work in this field \cite{armstrong2019local, leble2018fluctuations, bekerman2018clt, leble2017large1}, we define a next order partition function $K_{N,\beta},$ as 
\begin{equation}\label{definitionofnextorderpartitionfunction}
    K_{N,\beta}=Z_{N,\beta} \exp\left( -N^{2}\beta \mathcal{E}_{\beta} \left( \mu_{\beta} \right) \right).
\end{equation}
Using \eqref{definitionofnextorderpartitionfunction}, we may rewrite the Gibbs measure as
\begin{equation}
  d \mathbf{P}_{N, \beta}(x_{1}...x_{N}) = \frac{1}{K_{N, \beta}} \exp\left( -\frac{1}{2} N^{2}\beta \mathcal{E}^{\neq}({\rm emp}_{N}-\mu_{\beta} ) \right) \Pi_{i=1}^{N} \mu_{\beta}(x_{i}) d x_{i}.
\end{equation}
\end{definition}

We also need the following piece of information about $\mu_{\beta}$, which can be deduced from \cite{armstrong2019thermal}, Theorem 1.
\begin{remark}
\label{rem:unifconv}
Let $T>0, \lambda >0$ and assume that $\lim_{N \to \infty} N \beta = \infty$, and the potential $V$ is admissible, then
\begin{equation}
    \| \mu_{\beta}^{N^{\lambda}} - \mu_{V}(0)  \|_{L^{\infty} (B(0,T))} \to 0.
\end{equation}
\end{remark}

We proceed to prove some elementary properties about the rate functions in Theorem \ref{maintheorem}. 

\begin{claim}
For any $\alpha, R >0$, the function $\mathcal{N}(\nu| \alpha \mathbf{1}_{\square_{R}} )$ is a convex (in $\nu$) rate function. 
\end{claim}

\begin{proof}
Since convexity and l.s.c. are immediate from the convexity and l.s.c. of ent, we need only show that $\mathcal{N}(\nu| \alpha \mathbf{1}_{\square_{R}} )$ is positive for any $\nu \in \mathcal{M}^{+}(\square_{R})$. Throughout the proof, we will use the notation 
\begin{equation}
    \overline{\nu} = \frac{1}{R^{d}} \int_{\square_{R}} \nu dx.
\end{equation}
Using Jensen's inequality, the convexity of $x \mapsto x\log(x),$ and doing a first-order Taylor expansion of $x\log(x),$ we have
\begin{equation}
    \begin{split}
        \mathcal{N}(\nu| \alpha \mathbf{1}_{\square_{R}} ) &= \int_{\square_{R}} \log\left( \frac{\nu}{ \alpha} \right) \nu dx + R^{d} \alpha - |\nu|\\
        &= \int_{\square_{R}} \log\left( \frac{\nu}{ \alpha} \right) \frac{\nu}{ \alpha }  \alpha dx + R^{d} \alpha - |\nu|\\
        &\geq \int_{\square_{R}} \log\left( \frac{\overline{\nu}}{ \alpha} \right) \frac{\overline{\nu}}{ \alpha }  \alpha dx + R^{d} \alpha - |\nu|\\
        &= R^{d}\alpha \log\left( \frac{\overline{\nu}}{ \alpha} \right) \frac{\overline{\nu}}{ \alpha }   + R^{d} \alpha - |\nu|\\
        &\geq R^{d}\alpha \left(  \frac{\overline{\nu}}{ \alpha} -1 \right)  + R^{d} \alpha - |\nu|\\
        &= 0.
    \end{split}
\end{equation}
\end{proof}

The following claim is standard and can be found, for example, in \cite{padilla2023concentration}.

\begin{lemma}
\label{lem:fund}
The energy $\mathcal{E}$ is l.s.c. w.r.t. to weak $H^{-1}$ convergence.
\end{lemma}

With the help of Lemma \ref{lem:fund}, we can prove some elementary properties about $\Phi_{\square_{R}}^{\alpha}$. 

\begin{lemma}
For any $R, \alpha >0$ and any measure $\mu$ in $\square_{R}$ such that $\mathcal{E}(\mu) < \infty$, the infimum in the definition of $\Phi_{\square_{R}}^{\alpha} (\mu)$ is achieved.
\end{lemma}

\begin{proof}
Let $\rho_{N}$ be a minimizing sequence for 
\begin{equation}
    \inf_{\rho \in \mathcal{M}^{+}(\mathbf{R}^{d} \setminus \square_{R})}  \mathcal{E}(\mu + \rho - \alpha) .
\end{equation}
Note that
\begin{equation}
  \limsup_{N \to \infty}  \mathcal{E} (\mu+\rho_{N} - \alpha) < \infty.
\end{equation}
Hence, modulo a subsequence, 
\begin{equation}
    \rho_{N} \rightharpoonup \rho
\end{equation}
weakly in $H^{-1}$ for some $\rho \in \mathcal{M}^{+}(\mathbf{R}^{d} \setminus \square_{R})$. By l.s.c. of $\mathcal{E},$ we have
\begin{equation}
    \begin{split}
        \mathcal{E}(\mu+\rho - \alpha) & \leq \liminf_{N \to \infty}  \mathcal{E}(\mu+\rho_{N} - \alpha) \\
        &=   \inf_{\rho \in \mathcal{M}^{+}(\mathbf{R}^{d} \setminus \square_{R})}  \mathcal{E}(\mu + \rho - \alpha) .
    \end{split}
\end{equation}
\end{proof}

We now prove that the function $\Phi_{\square_{R}}^{\alpha}$ is a convex rate function for any $\alpha, R >0$. 

\begin{claim}
For any $\alpha, R >0$, the function $\Phi_{\square_{R}}^{\alpha}$ is a convex rate function. 
\end{claim}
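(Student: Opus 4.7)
My plan is to verify the three defining properties in turn: $\Phi_{\square_{R}}^{\mu_V(0)}$ takes values in $[0,\infty]$, is convex, and is lower semicontinuous. Nonnegativity is immediate from the identity $\mathcal{E}(\sigma) = c_d^{-1}\int|\nabla h^\sigma|^2\, dx \geq 0$ used in the previous lemma, since $\Phi_{\square_R}^{\mu_V(0)}$ is an infimum of such quantities.

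For convexity, let $\nu_1,\nu_2\in\mathcal{M}^+(\square_R)$ with $\Phi(\nu_i)<\infty$ (otherwise the inequality is trivial). By the previous lemma the infimum defining $\Phi$ is attained, so pick minimizers $\rho_i\in\mathcal{M}^+(\mathbf{R}^d\setminus\square_R)$. For $t\in[0,1]$ the measure $t\rho_1+(1-t)\rho_2$ is again nonnegative and supported in $\mathbf{R}^d\setminus\square_R$, hence admissible for $\Phi(t\nu_1+(1-t)\nu_2)$. The key point is that $\mathcal{E}$ is a quadratic form, in fact $\mathcal{E}(\sigma)=c_d^{-1}\|h^\sigma\|_{\dot H^1}^2$, so it is convex. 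Splitting $\mu_V(0)\mathbf{1}_{\square_R}=t\mu_V(0)\mathbf{1}_{\square_R}+(1-t)\mu_V(0)\mathbf{1}_{\square_R}$ and applying convexity of $\mathcal{E}$ gives $\Phi(t\nu_1+(1-t)\nu_2)\leq t\Phi(\nu_1)+(1-t)\Phi(\nu_2)$.

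For lower semicontinuity, suppose $\nu_n\to\nu$ in the relevant topology (weak convergence, or equivalently for measures of uniformly bounded energy, weak $H^{-1}$). Passing to a subsequence we may assume $\Phi(\nu_n)\to L:=\liminf\Phi(\nu_n)$, and we may assume $L<\infty$. Let $\rho_n$ be an optimal extension for each $\nu_n$. Then $\sigma_n:=\nu_n+\rho_n-\mu_V(0)\mathbf{1}_{\square_R}$ satisfies $\mathcal{E}(\sigma_n)=\Phi(\nu_n)\to L$, so $\sigma_n$ is bounded in $\dot H^{-1}$ and one may extract $\sigma_n\rightharpoonup\sigma$ weakly in $H^{-1}$. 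Define $\rho:=\sigma-\nu+\mu_V(0)\mathbf{1}_{\square_R}$. Positivity of $\rho$ and the support condition $\operatorname{supp}(\rho)\subset\mathbf{R}^d\setminus\square_R$ pass to the limit under weak convergence. Applying the lemma on l.s.c.\ of $\mathcal{E}$ with respect to weak $H^{-1}$ convergence,
\begin{equation*}
\Phi(\nu)\leq \mathcal{E}(\nu+\rho-\mu_V(0)\mathbf{1}_{\square_R})=\mathcal{E}(\sigma)\leq\liminf_{n\to\infty}\mathcal{E}(\sigma_n)=L,
\end{equation*}
which is the desired l.s.c.\ inequality.

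The main technical subtlety will be the compactness step: extracting a weak limit of $\rho_n$ with the correct support. One must rule out mass of $\rho_n$ escaping to infinity, which can be handled by noting that uniform control of $\mathcal{E}(\sigma_n)$ together with the fact that $\nu_n$ are uniformly compactly supported in $\square_R$ gives an $H^{-1}$ bound on $\rho_n$ itself (since $\nu_n$ and $\mu_V(0)\mathbf{1}_{\square_R}$ are already $H^{-1}$-compact), so that weak compactness yields a limit $\rho$ in $H^{-1}$ which is automatically a nonnegative distribution supported in $\mathbf{R}^d\setminus\square_R$, hence a measure of the required type.
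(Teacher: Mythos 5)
Your proof matches the paper's argument almost step for step: convexity is obtained exactly as in the paper by taking the optimal extensions $\rho_1,\rho_2$, forming their convex combination as a test function, and invoking convexity of the quadratic form $\mathcal{E}$; lower semicontinuity is obtained by extracting a weak $\dot H^{-1}$ limit $\sigma$ of $\sigma_n=\nu_n+\rho_n-\mu_V(0)\mathbf{1}_{\mathbf{R}^d}$, identifying the limiting $\rho$, and applying the previously established l.s.c.\ of $\mathcal{E}$. The one place you are more terse than the paper is the assertion that positivity and the support condition for $\rho$ "pass to the limit": the paper verifies this by testing $\sigma$ against $C^\infty_0(\square_R)$ to identify its interior restriction and then separately checking measurable subsets of $\partial\square_R$ to rule out sign defects on the boundary, and you should make that boundary check explicit. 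Also note a small slip: in the energy expression the subtracted background should be $\mu_V(0)\mathbf{1}_{\mathbf{R}^d}$ (the constant background extends outside $\square_R$ as well), not $\mu_V(0)\mathbf{1}_{\square_R}$, though this does not affect the logic of the argument.
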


\begin{proof}
We first prove convexity. Let $\mu, \nu$ be measures on $\square_{R}$ such that $\Phi_{\square_{R}}^{\alpha}(\mu) + \Phi_{\square_{R}}^{\alpha}(\nu) < \infty$.  Let 
\begin{equation}
    \rho_{\mu} = \operatorname{argmin}_{\rho \in \mathcal{M}^{+}(\mathbf{R}^{d} \setminus \square_{R})}  \mathcal{E}(\mu + \rho - \alpha) ,
\end{equation}
and 
\begin{equation}
    \rho_{\nu} = \operatorname{argmin}_{\rho \in \mathcal{M}^{+}(\mathbf{R}^{d} \setminus \square_{R})} \mathcal{E}(\nu + \rho - \alpha).
\end{equation}
Then, using the convexity of $\mathcal{E}$ we have
\begin{equation}
    \begin{split}
        \Phi_{\square_{R}}^{\alpha} \left(\frac{1}{2} \left(\mu+\nu\right) \right) &\leq \mathcal{E} \left( \frac{1}{2} (\mu+\nu)  + \frac{1}{2} (\rho_{\mu}+\rho_{\nu}) - \alpha)  \right) \\
        &\leq  \frac{1}{2} \bigg( \mathcal{E} \left(  \mu  +  \rho_{\mu} - \alpha  \right) + \mathcal{E} \left(  \mu  +  \rho_{\mu} - \alpha  \right)\bigg) \\
        &= \frac{1}{2} \left(  \Phi_{\square_{R}}^{\alpha} ( \mu ) +  \Phi_{\square_{R}}^{\alpha} ( \nu ) \right).
    \end{split}
\end{equation}
This proves the convexity of $\Phi_{\square_{R}}^{\alpha}$. We now turn to prove that  $\Phi_{\square_{R}}^{\alpha} $ is l.s.c. Since it is clearly positive, this will conclude the proof. Let $\mu$ be a measure in $\square_{R}$ such that $\Phi_{\square_{R}}^{\alpha}(\mu)<\infty$ and let $\mu_{n}$ be a sequence of measures in $\square_{R}$ such that
\begin{equation}
    \mu_{n} \rightharpoonup \mu
\end{equation}
weakly in the sense of measures. Let 
\begin{equation}
    \rho_{n} = \operatorname{argmin}_{\rho \in \mathcal{M}^{+}(\mathbf{R}^{d} \setminus \square_{R})}  \left( \mathcal{E}(\mu_{n} + \rho - \alpha) \right).
\end{equation}
Note that
\begin{equation}
   \limsup_{n \to \infty} \mathcal{E}(\mu_{n} + \rho_{n} - \alpha) < \infty.
\end{equation}
Then by precompactness, we have that the sequence $\mu_{n}+\rho_{n} - \alpha $ is precompact in the weak $H^{-1}$ topology (note that we are not claiming precompactness for convergence in the BL metric, which is clearly not true in general). Let $\sigma$ be such that
\begin{equation}
    \mu_{n} + \rho_{n}  - \alpha  \rightharpoonup \sigma.
\end{equation}
It is easy to see that $\sigma$ and $\mu- \alpha \mathbf{1}_{\square_{R}}$ agree in the interior of $\square_{R}.$ Note also that
\begin{equation}
    \rho := \sigma - (\mu- \alpha \mathbf{1}_{\square_{R}})
\end{equation}
is a positive measure, and therefore it can be used as a test function in the definition of $\Phi_{\square_{R}}^{\alpha} $. Then, using l.s.c. of $\mathcal{E}$ we have
\begin{equation}
    \begin{split}
         \Phi_{\square_{R}}^{\alpha} (\mu) &\leq \mathcal{E} \left( \mu + \rho - \alpha) \right) \\
         &\leq \liminf_{n \to \infty} \mathcal{E} \left( \mu_{n} + \rho_{n} - \alpha) \right) \\
         &= \liminf_{n \to \infty} \Phi_{\square_{R}}^{\alpha} (\mu_{n}). 
    \end{split}
\end{equation}
\end{proof}

We will now prove that the rate functions are good.
\begin{claim}
For any $R, \alpha>0,$ the function $\mathcal{N}(\mu|\alpha\mathbf{1}_{\square_{R}})$ is a good rate function, i.e. sublevel sets are precompact in the topology of weak convergence of measures.
\end{claim}

\begin{proof}
Consider the sublevel sets
\begin{equation}
    L_{M} = \{ \mu \in \mathcal{M}^{+}(\square_{R}) | \mathcal{N}(\mu|\alpha\mathbf{1}_{\square_{R}}) < M \}.
\end{equation}
We will prove that there exists $N$ such that if 
\begin{equation}
    \mu \in L_{M}
\end{equation}
then
\begin{equation}
    |\mu| \leq N,
\end{equation}
which will imply the desired compactness. Let 
\begin{equation}
    \overline{\mu} = \frac{1}{R^{d}} \int_{\square_{R}} d \mu.
\end{equation}
Using Jensen's inequality, we have
\begin{equation}
    \begin{split}
        \mathcal{N}(\mu|\alpha\mathbf{1}_{\square_{R}}) &\geq  \mathcal{N}(\overline{\mu}| \alpha \mathbf{1}_{\square_{R}})\\
        &= R^{d} \alpha \left( \frac{\overline{\mu}}{\alpha} \log \left( \frac{\overline{\mu}}{\alpha} \right) -  \frac{\overline{\mu}}{\alpha} +1 \right).
    \end{split}
\end{equation}
Since $x \log(x) - x \to \infty$ as $x \to \infty,$ we have that there exists $N$ such that $|\mu| < N$ if $\mu \in L_{M}.$ Hence, $L_{M}$ is precompact in the topology of weak convergence.
\end{proof}

We now prove that $\Phi^{\alpha}_{\square_{R}}$ is a good rate function.

\begin{claim}
For any $R, \alpha >0$ the function $\Phi^{\alpha}_{\square_{R}}$ is a good rate function, i.e. sublevel sets are precompact in the topology of weak convergence of measures..
\end{claim}

\begin{proof}
   Let $\mu_{n} \in \mathcal{M}^{+}(\square_{R})$ be such that 
\begin{equation}
\label{eq:finite}
    \limsup_{n \to \infty } \Phi^{\alpha}_{\square_{R}}(\mu_{n}) < \infty. 
\end{equation} 

Let 
\begin{equation}
    \rho_{n} = \operatorname{argmin}_{\rho \in \mathcal{M}^{+}(\mathbf{R}^{d} \setminus \square_{R})}   \mathcal{E}(\mu_{n} + \rho - \alpha).
\end{equation}

Since we are assuming equation \eqref{eq:finite}, we have that $\mu_{n} + \rho_{n}$ converges, modulo a subsequence (not relabelled) weakly in the $H^{-1}$ topology. Hence the restriction to $\square_{R}$, $\mu_{n}$ converges weakly in the $H^{-1}$ topology. In particular, 
\begin{equation}
\label{eq:finite2}
    \limsup_{n \to \infty} \mathcal{E}(\mu_{n}) < \infty.
\end{equation}

Since $\mu_{n}$ is a positive measure, equation \eqref{eq:finite2} implies that
\begin{equation}
    \limsup_{n \to \infty} |\mu_{n}|< \infty,
\end{equation}
which implies that modulo a subsequence (not relabelled) $\mu_{n}$ converges in the topology of weak convergence of probability measures. 

\end{proof}

\section{Proof of upper bound}

In this section, we prove the upper bound of Theorem \ref{maintheorem}. Recall that we use the notation
\begin{equation}
    X_{N} = (y_{1}, ...y_{i_{N}}, z_{1}, ... z_{j_{N}}),
\end{equation}
with 
\begin{equation}
    y_{m} \in \square_{R}, \quad z_{m} \notin \square_{R}.
\end{equation}

\begin{proof}[Proof of Theorem \ref{maintheorem}, upper bound]

We begin by using the splitting formula for the thermal equilibrium measure (Proposition \eqref{thermalsplittingformula}). Let $\epsilon, R >0$ and $\nu \in \mathcal{M}^{+}(\square_{R})$, then

\begin{equation}\label{lowerbound}
    \begin{split}
       & \mathbf{P}_{N, \beta} \left( {\rm lemp}_{N} \in B(\nu, \epsilon) \right) =\\
        & \frac{1}{Z_{N, \beta}} \int_{X_{N}: {\rm lemp}_{N} \in B(\nu, \epsilon)} \exp \left( -\beta \mathcal{H}(X_{N}) \right) dX_{N}=\\
        & \frac{1}{K_{N, \beta}} \int_{X_{N}: {\rm lemp}_{N} \in B(\nu, \epsilon)} \exp \left( -\frac{1}{2}\beta N^{2} \mathcal{E}^{\neq}({\rm emp}_{N}-\mu_{\beta}) \right) \Pi_{i=1}^{N} \mu_{\beta}(x_{i})dx_{i} \leq \\
        & \frac{1}{K_{N, \beta}} \int_{X_{N}: {\rm lemp}_{N} \in B(\nu, \epsilon)} \exp \left( -\frac{1}{2}\beta N^{2} \mathbf{W}_{\square_{\frac{R}{N^{\lambda}}}, \frac{1}{N}}^{N-i_{N}, \mu_{\beta}}({\rm emp}'_{N}) \right) \Pi_{i=1}^{N} \mu_{\beta}(x_{i})dx_{i} = \\
        &\frac{1}{K_{N, \beta}} \int_{X_{N}: {\rm lemp}_{N} \in B(\nu, \epsilon)}\\
        & \quad \exp \left( -\frac{1}{2}\beta N^{2-(d+2)\lambda} \mathbf{W}_{\square_{R}, N^{-1+\lambda d}}^{N -i_{N}, \mu_{\beta}^{N^{\lambda}}}({\rm lemp}_{N}) \right) \Pi_{i=1}^{N} \mu_{\beta}(x_{i})dx_{i} \leq \\
        &\frac{1}{K_{N, \beta}} \sup_{\mu \in B(\nu, \epsilon) \cap  \mathcal{A}_{i_{N}}^{N^{-1+\lambda d}}(\square_{R})} \\
        & \quad \left\{\exp \left( -\frac{1}{2}\beta N^{2-(d+2)\lambda} \mathbf{W}_{\square_{R}, N^{-1+\lambda d}}^{N-i_{N}, \mu_{\beta}^{N^{\lambda}}}(\mu) \right) \right\} \int_{X_{N}: {\rm lemp}_{N} \in B(\nu, \epsilon)} \Pi_{i=1}^{N} \mu_{\beta}(x_{i})dx_{i}.
    \end{split}
\end{equation}

In order to pass from the third to the fourth line, we have used that 
\begin{equation}
\begin{split}
    &\frac{1}{K_{N, \beta}} \int_{X_{N}: {\rm lemp}_{N} \in B(\nu, \epsilon)} \exp \left( -\frac{1}{2}\beta N^{2} \mathcal{E}^{\neq}({\rm emp}_{N}-\mu_{\beta}) \right) \Pi_{i=1}^{N} \mu_{\beta}(x_{i})dx_{i} \leq \\
     &\frac{1}{K_{N, \beta}} \int_{X_{N}: {\rm lemp}_{N} \in B(\nu, \epsilon)} \exp \left( -\frac{1}{2}\beta N^{2} \inf_{z_{i} \in \mathbf{R}^{d} \setminus \square_{R N^{-\lambda}}} \mathcal{E}^{\neq}({\rm emp}_{N}-\mu_{\beta}) \right) \Pi_{i=1}^{N} \mu_{\beta}(x_{i}) dx_{i},
\end{split}     
\end{equation}
since for any $Z_{N}^{*} \in ( \mathbf{R}^{d} \setminus \square_{R N^{-\lambda}} )^{j_{N}}$
\begin{equation}
     \mathcal{E}^{\neq}({\rm emp}_{N}(Y_{N}, Z_{N}^{*})-\mu_{\beta}) \geq  {\inf_{z_{i} \in \mathbf{R}^{d} \setminus \square_{R N^{-\lambda}}}} \mathcal{E}^{\neq}({\rm emp}_{N}(Y_{N}, Z_{N})-\mu_{\beta}).
\end{equation}

But given $y_{i} \in \square_{R N^{-\lambda}},$ we have
\begin{equation}
   \inf_{Z_{N} \in ( \mathbf{R}^{d} \setminus \square_{R N^{-\lambda}} )^{j_{N}}} \mathcal{E}^{\neq}({\rm emp}_{N}-\mu_{\beta}) {\geq} \mathbf{W}_{\square_{\frac{R}{N^{\lambda}}}, \frac{1}{N}}^{N-i_{N}, \mu_{\beta}}({\rm emp}'_{N}),
\end{equation}
see equation \eqref{definitiongeneralPhi}.

We now treat each of the terms in the last line of equation \eqref{lowerbound} individually. The second term is the easier, and we will will deal with it at the end of this section. More specifically, we will prove the following lemma:

\begin{lemma}\label{entropyterm}
Let $R, \epsilon >0$ and $\nu \in \mathcal{M}^{+}(\square_{R})$. Then
\begin{equation}
\begin{split}
     & \limsup_{N \to \infty} \Bigg( \frac{1}{N^{1-\lambda d}} \log \Bigg(  \int_{X_{N}: {\rm lemp}_{N} \in B(\nu, \epsilon)}  \Pi_{i=1}^{N} \mu_{\beta}(x_{i}) dx_{i} \Bigg) \Bigg) \leq \\
     & - \inf_{\mu \in B(\nu, \epsilon)} ( \mathcal{N}(\mu| \mu_{V}(0) \mathbf{1}_{\square_{R}}) ). 
\end{split}
\end{equation}

\end{lemma}

The analysis of the first term is more delicate, and we deal with it in section $6$. The result we prove is the following:
\begin{lemma}\label{passingtolimit}
Let $R, \epsilon >0$, let $\nu \in \mathcal{M}^{+}(\square_{R})$ and $i_{N}$ be an integer smaller than or equal to $N$. Then
\begin{equation}
         \inf_{\mu \in B(\nu, \delta)} \Phi_{\square_{R}}^{\mu_{V}(0)}(\mu) \leq \liminf_{N \to \infty} \inf_{\mu \in B(\nu, \delta)  \cap  \mathcal{A}_{i_{N}}^{N^{-1+\lambda d}}(\square_{R})} \mathbf{W}_{\square_{R}, N^{-1+\lambda d}}^{N-i_{N}, \mu_{\beta}^{N^{\lambda}}}(\mu).  
\end{equation}

Furthermore, for any $\nu \in \mathcal{M}^{+}(\square_{R})$ such that $\mathcal{E}(\nu) < \infty$ we have
\begin{equation}
 \lim_{N \to \infty} \left| \Phi^{\mu_{V}(0)}_{\square_{R}}(\nu) -  \Phi^{\mu_{\beta}^{N^{\lambda}}}_{\square_{R}}(\nu) \right| = 0. 
\end{equation}
\end{lemma}

We will now finish the proof of the upper bound in Theorem \ref{maintheorem} using Lemmas \ref{entropyterm} and \ref{passingtolimit}. We start with the last line of equation \eqref{lowerbound}: 
\begin{equation}
\begin{split}
        &\mathbf{P}_{N, \beta} \left( {\rm lemp}_{N} \in B(\nu, \epsilon) \right) \leq \\
        &\frac{1}{K_{N, \beta}} \sup_{\mu \in B(\nu, \epsilon)  \cap  \mathcal{A}_{i_{N}}^{N^{-1+\lambda d}}(\square_{R}) }\\
        & \quad \left\{\exp \left( -\frac{1}{2}\beta N^{2-(d+2)\lambda} \mathbf{W}_{\square_{R}, N^{-1+\lambda d}}^{N-i_{N}, \mu_{\beta}^{N^{\lambda}}}(\mu) \right) \right\} \int_{X_{N}: {\rm lemp}_{N} \in B(\nu, \epsilon)} \Pi_{i=1}^{N} \mu_{\beta}(x_{i})dx_{i}.
\end{split}        
\end{equation}

Using results from \cite{rougerie2016higher}, or from \cite{armstrong2019local}, we know that
\begin{equation}\label{nextorderpartitionfunctionissmall}
    |\log(K_{N, \beta}) | \leq C \beta N^{2-\frac{2}{d}},
\end{equation}
using the hypothesis that $\lambda < \frac{1}{d(d+2)}$ we have that
\begin{equation}
     |\log(K_{N, \beta}) | = o (\min ( \beta N^{2-\lambda(d+2)}, N^{1 - \lambda d}).
\end{equation}
Bounding this error term (and bounding a similar error term in the upper bound) is the only step in which we use the hypothesis that $\lambda < \frac{1}{d(d+2)}$.

Note that, if $\gamma < \gamma^{*}$ then 
\begin{equation}
    2-(d+2)\lambda-\gamma>1-\lambda d,
\end{equation}
and so
\begin{equation}
   \limsup_{N \to \infty} \frac{1}{\beta N^{2-(d+2)\lambda}}\log \left( \mathbf{P}_{N, \beta} \left( {\rm lemp}_{N} \in B(\nu, \epsilon) \right) \right) \leq -\frac{1}{2}\inf_{ \mu \in B(\nu, \epsilon)} \Phi^{\mu_{V}(0)}_{\square_{R}}(\mu).
\end{equation}

And finally, if $\gamma > \gamma^{*}$ then 
\begin{equation}
    2-(d+2)\lambda-\gamma<1-\lambda d,
\end{equation}
and so
\begin{equation}
   \limsup_{N \to \infty} \frac{1}{N^{1-\lambda d}}\log \left( \mathbf{P}_{N, \beta} \left( {\rm lemp}_{N} \in B(\nu, \epsilon) \right) \right) \leq  -\inf_{ \mu \in B(\nu, \epsilon)} \mathcal{N}[\mu|\mu_{V}(0)\mathbf{1}_{\square_{R}}].
\end{equation}
\end{proof}

This concludes the proof of the upper bound of Theorem \ref{maintheorem}. We now turn to the proof of the auxiliary lemmas (Lemmas \ref{entropyterm} and \ref{passingtolimit}). We start with Lemma \ref{entropyterm}, which we restate here for convenience:
\begin{lemma}\label{entropyterm2}
Let $R, \epsilon >0$ and $\nu \in \mathcal{M}^{+}(\square_{R})$. Then
\begin{equation}
\begin{split}
     & \limsup_{N \to \infty} \Bigg( \frac{1}{N^{1-\lambda d}} \log \Bigg(  \int_{X_{N}: {\rm lemp}_{N} \in B(\nu, \epsilon)}  \Pi_{i=1}^{N} \mu_{\beta}(x_{i}) dx_{i} \Bigg) \Bigg) \leq \\
     & - \inf_{\mu \in B(\nu, \epsilon)} ( \mathcal{N}(\mu| \mu_{V}(0) \mathbf{1}_{\square_{R}}) ). 
\end{split}
\end{equation}

\end{lemma}

\begin{proof}
Using Sanov's theorem and the scaling relation of ${\rm ent}$, we have that
\begin{equation}\label{startingpoint}
   \lim_{N \to \infty} \frac{1}{N^{1-\lambda d}} \log \left( \int_{X_{N}: {\rm lemp}_{N} \in B(\nu, \epsilon)}  \Pi_{i=1}^{N} \mu_{\beta}(x_{i}) dx_{i} \right) \leq - \inf_{\rho} {\rm ent}[{\rho} | \mu_{\beta}^{N^{\lambda}} ],
\end{equation}
where the infimum is taken over $\rho$ such that $|\rho|=N^{\lambda d}$ and $\rho|\square_{R} \in B(\nu, \epsilon).$ Note that we may rewrite equation \eqref{startingpoint} as
\begin{equation}
\label{eq:minprob}
\begin{split}
      &   \lim_{N \to \infty} \frac{1}{N^{1-\lambda d}} \log \left( \int_{X_{N}: {\rm lemp}_{N} \in B(\nu, \epsilon)}  \Pi_{i=1}^{N} \mu_{\beta}(x_{i}) dx_{i} \right) \leq  \\
      & -\inf_{ \mu \in B(\nu, \epsilon)} \left( {\rm ent}[ \mu | \mu_{\beta}^{N^{\lambda}}|_{
    \square_{R}}] + \inf_{\rho} {\rm ent}[\rho | \mu_{\beta} ] \right), 
\end{split}
\end{equation}
where the infimum is taken over all $\rho \in \mathcal{M}^{+} (\mathbf{R}^{d} \setminus \square_{R})$ such that $|\rho| = N^{\lambda d} - |\mu|.$

We first determine the optimal $\rho$ in the minimization problem of equation \eqref{eq:minprob} for a given $\mu$. This can be done by adding a Lagrange multiplier for the constraint of mass and then computing the Euler Lagrange equations. The solution is that the minimizer $\rho^{*}$ is given by
\begin{equation}
    \rho^{*} = \kappa \mu_{\beta}^{N^{\lambda}} \mathbf{1}_{\mathbf{R}^{d} \setminus \square_{R}},
\end{equation}
where $\kappa$ is given by
\begin{equation}
    \kappa = \frac{N^{\lambda d} - |\mu|}{ \int_{\mathbf{R}^{d} \setminus \square_{R}} \mu_{\beta}^{N^{\lambda}}}.
\end{equation}

Hence we have that, for each $\mu \in B(\nu, \epsilon),$
\begin{equation}
    \begin{split}
        \lim_{N \to \infty}{\rm ent}[\mu + \rho^{*}| \mu_{\beta}^{N^{\lambda}}] &=  \lim_{N \to \infty} {\rm ent}[\mu | \mu_{\beta}^{N^{\lambda}}|_{\square_{R}}]+ \int_{\mathbf{R}^{d} \setminus \square_{R}} \log(\kappa) \kappa \mu_{\beta}^{N^{\lambda}} dx\\
        &= {\rm ent}[\mu | \mu_{V}(0) \mathbf{1}_{\square_{R}}] +  \lim_{N \to \infty} \kappa(\kappa-1) \int_{\mathbf{R}^{d} \setminus \square_{R}} \mu_{\beta}^{N^{\lambda}} dx .
    \end{split}
\end{equation}

In the last equation, we have used Remark \ref{rem:unifconv} and the approximation $\log(\kappa) \simeq \kappa-1$, since $\kappa$ tends to $1$ as $N$ tends to $\infty$. Recalling that \begin{equation}
    \int_{\mathbf{R}^{d}} \mu_{\beta}^{N^{\lambda}} = N^{\lambda d},
\end{equation} 
and using again Remark \ref{rem:unifconv} we have that
\begin{equation}
    \lim_{N \to \infty} \kappa(\kappa-1) \int_{\mathbf{R}^{d} \setminus \square_{R}} \mu_{\beta}^{N^{\lambda}} dx = R^{d} \mu_{V}(0) - |\mu|.
\end{equation}
Therefore
\begin{equation}
\begin{split}
     & \limsup_{N \to \infty} \Bigg( \frac{1}{N^{1-\lambda d}} \log \Bigg(  \int_{X_{N}: {\rm lemp}_{N} \in B(\nu, \epsilon)}  \Pi_{i=1}^{N} \mu_{\beta}(x_{i}) dx_{i} \Bigg) \Bigg) \leq \\
     & - \inf_{\mu \in B(\nu, \epsilon)} ( \mathcal{N}(\mu| \mu_{V}(0) \mathbf{1}_{\square_{R}}) ). 
\end{split}
\end{equation}
\end{proof}

\section{Proof of Lemma \ref{passingtolimit}}

In this section, we prove Lemma \ref{passingtolimit}, which we restate here for convenience:
\begin{lemma}\label{passingtolimit2}
Let $R, \epsilon >0$, let $\nu \in \mathcal{M}^{+}(\square_{R})$ and $i_{N}$ be an integer smaller than or equal to $N$. Then
\begin{equation}
         \inf_{\mu \in B(\nu, \delta)} \Phi_{\square_{R}}^{\mu_{V}(0)}(\mu) \leq \liminf_{N \to \infty} \inf_{\mu \in B(\nu, \delta)  \cap  \mathcal{A}_{i_{N}}^{N^{-1+\lambda d}}(\square_{R}) } \mathbf{W}_{\square_{R}, N^{-1+\lambda d}}^{N-i_{N}, \mu_{\beta}^{N^{\lambda}}}(\mu).  
\end{equation}

Furthermore, for any $\nu \in \mathcal{M}^{+}(\square_{R})$ such that $\mathcal{E}(\nu) < \infty$ we have
\begin{equation}
 \lim_{N \to \infty} \left| \Phi^{\mu_{V}(0)}_{\square_{R}}(\nu) -  \Phi^{\mu_{\beta}^{N^{\lambda}}}_{\square_{R}}(\nu) \right| = 0. 
\end{equation}
\end{lemma}

The idea is that, on the one hand, given our choice of dilation, $({\rm emp}_{N})^{N^{\lambda}}$ will converge to a continuous measure on every compact set. This implies that we can replace the infimum over purely atomic measures with the infimum over absolutely continuous measures in $\mathcal{M}^{+}(\mathbf{R}^{d} \setminus \square_{R}).$ On the other hand, $\mu_{\beta}^{N^{\lambda}}$ will converge to $\mu_{V}(0)$ on compact sets, so we can replace the background measure $\mu_{\beta}^{N^{\lambda}}$ with $\mu_{V}(0).$ We will now make this intuition more rigorous. 

\begin{proof}

\textbf{\textit{Step 1}}

We claim that
\begin{equation}
     \liminf_{N \to \infty} \inf_{\mu \in B(\nu, \delta) }  \Phi_{\square_{R}}^{\mu_{\beta}^{N^{\lambda}}}(\mu)  \leq  \liminf_{N \to \infty} \inf_{\mu \in B(\nu, \delta)  \cap  \mathcal{A}_{i_{N}}^{N^{-1+\lambda d}}(\square_{R}) }  \mathbf{W}_{\square_{R },N^{\lambda d -1}}^{N-i_{N}, \mu_{\beta}^{N^{\lambda}}}(\mu) .
\end{equation}

To see this, let 
\begin{equation}
    \mu = \frac{1}{N} \sum_{i=1}^{i_{N}} \delta_{y_{i}},
\end{equation}
and
\begin{equation}
  \rho= \frac{1}{N}\sum_{i=1}^{j_{N}} \delta_{z_{i}},
\end{equation}
with $y_{i} \in \square_{R}$, $z_{i} \in \mathbf{R}^{d} \setminus \square_{R}$ and $i_{N} + j_{N} = N$. 

Now we define 
\begin{equation}
    \widetilde{\mu}=\mu \ast \lambda_{N^{-\frac{1}{d}}}
\end{equation}
and
\begin{equation}
    \widetilde{\rho} = \rho \ast \lambda_{N^{-\frac{1}{d}}},
\end{equation}
(see Remark \ref{def:lamb} for notation).

Then 
\begin{equation}
    \| \mu - \widetilde{\mu} \|_{BL} \leq N^{-\frac{1}{d}}
\end{equation}
and we also have, because of Lemmas \ref{smearinglemma1}, \ref{smearinglemma2}, \ref{smearinglemma3}, \ref{smearinglemma4} that
\begin{equation}
     \mathcal{E}(\widetilde{\mu}+\widetilde{\rho}-\mu_{\beta}) \leq  \mathcal{E}^{\neq}(\mu+{\rho}-\mu_{\beta})  + C N^{-\frac{2}{d}}.
\end{equation}

Note that $C$ depends only on $V$ and $d$, since $\mu_{\beta}$ is uniformly bounded in $N$ for $N$ large enough, with a bound that depends only on $V$ and $d$. 

Using the hypothesis that 
\begin{equation}
    \lambda < \frac{1}{d(d+2)},
\end{equation}
we have that
\begin{equation}
    N^{-\frac{2}{d}} << N^{- \lambda(d+2)},
\end{equation}
which implies, using the scaling relations of $\Phi_{\square_{R}}^{\mu_{\beta}^{N^{\lambda}}}$ and $\mathbf{W}_{\square_{R },N^{\lambda d -1}}^{N-i_{N}, \mu_{\beta}^{N^{\lambda}}}$, that
\begin{equation}\label{almostthere}
     \liminf_{N \to \infty} \inf_{\mu \in B(\nu, \delta)}  \Phi_{\square_{R}}^{\mu_{\beta}^{N^{\lambda}}}(\mu)  \leq  \liminf_{N \to \infty} \inf_{\mu \in B(\nu, \delta)  \cap  \mathcal{A}_{i_{N}}^{N^{-1+\lambda d}}(\square_{R}) }  \mathbf{W}_{\square_{R },N^{\lambda d -1}}^{N-i_{N}, \mu_{\beta}^{N^{\lambda}}}(\mu).
\end{equation}

\textbf{\textit{Step 2}} We now prove the second part of the claim: that for any $\nu \in \mathcal{M}^{+}(\square_{R})$ such that $\mathcal{E}(\nu) < \infty$ we have
\begin{equation}
 \lim_{N \to \infty} \left| \Phi^{\mu_{V}(0)}_{\square_{R}}(\nu) -  \Phi^{\mu_{\beta}^{N^{\lambda}}}_{\square_{R}}(\nu) \right| = 0. 
\end{equation}

We will first prove that
\begin{equation}
      \limsup_{N \to \infty}  \Phi^{\mu_{\beta}^{N^{\lambda}}}_{\square_{R}}(\nu) \leq \Phi^{\mu_{V}(0)}_{\square_{R}}(\nu) .
\end{equation}
To this end, let
\begin{equation}
    \overline{\rho} = \operatorname{argmin}_{\rho \in \mathcal{M}^{+}(\mathbf{R}^{d} \setminus \square_{R})} \mathcal{E} (\rho + \nu - \mu_{V}(0)).
\end{equation}

For any $\epsilon > 0$ let $T$ be such that
\begin{equation}
    \begin{split}
        &\left| \mathcal{E} \left( \left(\overline{\rho} + \nu - \mu_{V}(0)\right)\mathbf{1}_{B(0,T)} \right) - \mathcal{E} \left( \overline{\rho} + \nu - \mu_{V}(0) \right) \right| \leq \epsilon.
    \end{split}
\end{equation}

Taking $ \overline{\rho} \mathbf{1}_{B(0,T)} + \mu_{\beta}^{N^{\lambda}} \mathbf{1}_{\mathbf{R}^{d} \setminus B(0,T)} $ as a test function in the definition of $\Phi^{\mu_{\beta}^{N^{\lambda}}}_{\square_{R}}(\nu)$ and using Remark \ref{rem:unifconv} we have
\begin{equation}
    \begin{split}
       \limsup_{N \to \infty}  \Phi^{\mu_{\beta}^{N^{\lambda}}}_{\square_{R}}(\nu) &\leq  \limsup_{N \to \infty} \mathcal{E} \left( \left(\overline{\rho} + \nu - \mu_{\beta}^{N^{\lambda}} \right)\mathbf{1}_{B(0,T)} \right) \\
      &=  \mathcal{E} \left( \left(\overline{\rho} + \nu - \mu_{V}(0)\right)\mathbf{1}_{B(0,T)} \right) \\
      & \leq \mathcal{E} (\overline{\rho} + {\nu} - \mu_{V}(0)) +\epsilon  \\
      &= \Phi^{\mu_{V}(0)}_{\square_{R}}(\nu) + \epsilon.
    \end{split}
\end{equation}

Since $\epsilon>0$ is arbitrary, we can conclude that 
\begin{equation}
      \limsup_{N \to \infty}  \Phi^{\mu_{\beta}^{N^{\lambda}}}_{\square_{R}}(\nu) \leq \Phi^{\mu_{V}(0)}_{\square_{R}}(\nu) .
\end{equation}

We now turn to prove
\begin{equation}
        \Phi^{\mu_{V}(0)}_{\square_{R}}(\nu) \leq  \liminf_{N \to \infty} \Phi^{\mu_{\beta}^{N^{\lambda}}}_{\square_{R}}(\nu). 
\end{equation}
To this end, let 
\begin{equation}
    {\rho}_{N} = \operatorname{argmin}_{\rho } \mathcal{E} (\rho + \nu - \mathbf{1}_{\square_{R}} \mu_{\beta}^{N^{\lambda}}),
\end{equation}
where $\rho$ is minimized over measures satisfying $\rho \geq -\mu_{\beta}^{N^{\lambda}}$ and which are supported in $\mathbf{R}^{d} \setminus \square_{R}.$

Note that 
\begin{equation}
    \Phi^{\mu_{\beta}^{N^{\lambda}}}_{\square_{R}} (\nu) = \mathcal{E} (\rho_{N} + \nu - \mathbf{1}_{\square_{R}} \mu_{\beta}^{N^{\lambda}}).
\end{equation}

Then, since 
\begin{equation}
    \limsup_{N \to \infty} \mathcal{E} (\rho_{N} + \nu - \mathbf{1}_{\square_{R}} \mu_{\beta}^{N^{\lambda}}) < \infty,
\end{equation}
we have that
\begin{equation}
    \rho_{N} \rightharpoonup \widehat{\rho},
\end{equation}
weakly in $H^{-1},$ for some $\widehat{\rho}$. It is easy to check that $\widehat{\rho} \geq -\mu_{V}(0)$ a.e. Using l.s.c. of $\mathcal{E},$ we then have that

\begin{equation}
    \begin{split}
        \Phi^{\mu_{V}(0)}_{\square_{R}}(\nu) &\leq \mathcal{E} (\nu -\mu_{V}(0)\mathbf{1}_{\square_{R}} + \widehat{\rho}) \\
        &\leq \liminf_{N \to \infty} \mathcal{E} (\nu -\mu_{\beta}^{N^{\lambda}}\mathbf{1}_{\square_{R}} + {\rho}_{N})\\
        &= \liminf_{N \to \infty} \Phi^{\mu_{\beta}^{N^{\lambda}}}_{\square_{R}}(\nu). 
    \end{split}
\end{equation}

\textbf{\textit{Step 3}}  

We now prove the first part of the statement of Lemma \ref{passingtolimit}. In view of equation \eqref{almostthere}, we will prove that for any $\delta>0$ and any measure $\nu$ on $\square_{R}$ such that $\mathcal{E}(\nu) < \infty$,
\begin{equation}
         \inf_{\mu \in B(\nu, \delta)} \Phi_{\square_{R}}^{\mu_{V}(0)}(\mu) \leq \liminf_{N \to \infty} \inf_{\mu \in B(\nu, \delta)} \Phi^{\mu_{\beta}^{N^{\lambda}}}_{\square_{R}}(\mu).  
\end{equation}

To this end, let $\mu_{N} \in  B(\nu, \delta)$ be such that 
\begin{equation}
    \inf_{\mu \in B(\nu, \delta)} \Phi^{\mu_{\beta}^{N^{\lambda}}}_{\square_{R}}(\mu) =  \Phi^{\mu_{\beta}^{N^{\lambda}}}_{\square_{R}}(\mu_{N}), 
\end{equation}
we assume that the infimum is achieved for clarity of exposition, otherwise, we could prove the claim up to an arbitrary error by taking a minimizing sequence. 

Since $\mu_{N} \in B(\nu, \delta),$ we have that as $N$ tends to $\infty$
\begin{equation}
   \mu_{N} \rightharpoonup \overline{\mu},
\end{equation}
weakly in the sense of measures, for some $\overline{\mu} \in B(\nu, \delta).$ Let 
\begin{equation}
    {\rho}_{N} = \operatorname{argmin}_{\rho} \mathcal{E} (\mu + \mu_{N} - \mathbf{1}_{\square_{R}} \mu_{\beta}^{N^{\lambda}}),
\end{equation}
where $\rho$ is minimized over $\rho \geq -\mu_{\beta}^{N^{\lambda}}$ supported in $\mathbf{R}^{d} \setminus \square_{R}.$

Note that
\begin{equation}
    \limsup_{N \to \infty} \mathcal{E} (\mu_{N} + \rho_{N} - \mathbf{1}_{\square_{R}} \mu_{\beta}^{N^{\lambda}}) < \infty,
\end{equation}
therefore, for a subsequence
\begin{equation}
    \rho_{N} + \mu_{N} \rightharpoonup \overline{\rho} + \overline{\mu},
\end{equation}
weakly in $H^{-1},$ for some $\overline{\rho} \geq -\mu_{V}(0)$. Therefore we can use $\overline{\rho}$ as a test function in the definition of $\Phi_{\square_{R}}^{\mu_{V}(0)}$ and get
\begin{equation}
    \begin{split}
         \inf_{\mu \in B(\nu, \delta)} \Phi_{\square_{R}}^{\mu_{V}(0)}(\mu) &\leq  \Phi_{\square_{R}}^{\mu_{V}(0)}(\overline{\mu}) \\
         &\leq \mathcal{E} (\overline{\rho} + \overline{\mu} - \mathbf{1}_{\square_{R}} \mu_{V}(0)) \\
         &\leq \liminf_{N \to \infty} \mathcal{E} ({\mu}_{N} + \rho_{N} - \mathbf{1}_{\square_{R}} \mu_{\beta}^{N^{\lambda}}) \\
         & = \liminf_{N \to \infty} \inf_{\mu\in B(\nu, \delta)} \Phi^{\mu_{\beta}^{N^{\lambda}}}_{\square_{R}}(\mu).
    \end{split}
\end{equation}

\end{proof}

\section{Proof of lower bound}

This section is devoted to proving the lower bound of the LDP's of Theorem \ref{maintheorem}. The approach will be to construct a family of point configurations that has correct energy and sufficient volume. 

We start with a lemma, which builds upon a construction found in unpublished class notes by Sylvia Serfaty. 

\begin{lemma}\label{goodenergygoodvolume}
Let $\mu_{n}, \overline{\nu}$ be probability measures on a compact set $\Omega$ such that 
\begin{equation}
   \limsup_{n \to \infty} {\rm ent}[\mu_{n}]< \infty, \quad  {\rm ent}[\overline{\nu}]< \infty,
\end{equation}
$\overline{\nu} \in L^{\infty}(\Omega)$, and 
\begin{equation}
    \mathcal{E}(\overline{\nu}) < \infty.
\end{equation}
Assume that $\mu_{n}(x)$ is uniformly equi-continuous and bounded away from $0$ uniformly in $x$ and $n$.
Then for every $\epsilon, \delta, \eta, $ there exists a family of configurations
\begin{equation}
    \Lambda_{\delta}^{\eta, \epsilon} \subset \mathbf{R}^{d \times n}
\end{equation}
such that
\begin{itemize}
    \item[$\bullet$] \begin{equation}
    {\rm emp}_{n}(X_{n}) \in B(\overline{\nu}, \epsilon)
\end{equation}
for any $X_{n} \in  \Lambda_{\delta}^{\eta, \epsilon}.$
 \item[$\bullet$] 
 \begin{equation}
     \liminf_{n \to \infty} \frac{1}{n} \log \left( \int_{X_{n} \in  \Lambda_{\delta}^{\eta, \epsilon}} \Pi_{i=1}^{n} \mu_{n}(x_{i}) dX_{n} \right) \geq - \liminf_{n \to \infty}{\rm ent}[\overline{\nu}|\mu_{n}] - \delta. 
 \end{equation}
  \item[$\bullet$] 
  \begin{equation}\label{goodenergy}
     \limsup_{n \to \infty} \left| \mathcal{E}^{\neq} ({\rm emp}_{n}(X_{n}) - \overline{\nu} )  \right| \leq \eta^{2}. 
  \end{equation}
  \item There exists $r>0$ such that
  \begin{equation}
      d(x_{i}, \partial \Omega) > r n^{-\frac{1}{d}} \quad {\rm and} \quad d(x_{i}, x_{j})> r n^{-\frac{1}{d}},
  \end{equation}
  for $i \neq j$.
\end{itemize}

\end{lemma}

The proof of Lemma \ref{goodenergygoodvolume} is found in Section \ref{App:construction}. 

We will also require the following lemma, which deals with approximating certain partition functions. 

\begin{lemma}
\label{boundonZYN}
 Let $Y_{N}=(y_{1}, y_{2}, ... y_{i_{N}})$ with $y_{j} \in \square_{R N^{-\lambda}}$ for each $j$.
Let $\nu \in \mathcal{M}^{+}(\square_{R})$ such that
\begin{equation}
    \mathcal{E}(\nu) < \infty, \quad \nu \in L^{\infty}(\square_{R}).
\end{equation}
Assume that
 \begin{equation}\label{muclosetoempN}
  \limsup_{N \to \infty}  \left|  \mathcal{E}^{\neq} ({\rm lemp}_{N}(Y_{N}) - \nu )  \right| \leq \eta^{2},
 \end{equation}
also that
\begin{equation}
\label{eq:massmatching}
    \lim_{N \to \infty} |\nu| - N^{-1+\lambda d} i_{N} \to 0,
\end{equation}
and that there exists $r>0$ such that
\begin{equation}
    d(y_{i}, \partial \square_{R N^{-\lambda}}) \geq r N^{-\frac{1}{d}}
\end{equation}
and
\begin{equation}
    d(y_{i}, y_{j}) \geq r N^{-\frac{1}{d}}.
\end{equation}
 Let
\begin{equation}
\label{eq:modZ}
    Z_{N, \beta}^{Y_{N}}=\iint_{Z_{N} \in (\mathbf{R}^{d} \setminus \square_{R N^{-\lambda}})^{j_{k}}} \exp \left( - \beta \mathcal{H}_{N} (Y_{N}, Z_{N}) \right) dZ_{N}.
\end{equation}

Then for $\gamma < \gamma^{*}$ we have 

\begin{equation}
 \frac{1}{\beta N^{2 - \lambda(d+2)}} \left(  -\log ( Z_{N, \beta}^{Y_{N}}) - N^{2} \beta \mathcal{E}_{\beta}(\mu_{\beta}) \right) \leq   \frac{1}{2}\Phi^{\mu_{V}(0)}_{\square_{R}}(\nu)+ C \eta  + o_{N}(1).
\end{equation}
where $C$ and $o_{N}(1)$ are independent of $Y_{N}.$

For $\gamma > \gamma^{*}$ we have 

\begin{equation}
\begin{split}
 &N^{1+\lambda d} \beta \left(  -\frac{1}{ N^{2} \beta}\log ( Z_{N, \beta}^{Y_{N}}) - \mathcal{E}_{\beta}(\mu_{\beta}) \right) \leq  \\
 &\int_{\mathbf{R}^{d}} \log(\mu_{\beta}^{N^{\lambda}}) d( {\rm lemp}_{N}(Y_{N})) -|\nu|+R^{d} \mu_{V}(0) + o_{N}(1),
\end{split} 
\end{equation}
where $o_{N}(1)$ is independent of $Y_{N}.$
\end{lemma}

\begin{proof}

 We will divide the proof in 3 steps. The idea of the proof is that using the variational formulation of the partition function, as well as the splitting formula for the equilibrium measure (Proposition \eqref{thermalsplittingformula}), we can reduce the integral in equation \eqref{eq:modZ} to 
\begin{equation}
\begin{split}
    &-\log ( Z_{N, \beta}^{Y_{N}}) \simeq N^{2} \beta \bigg( \mathcal{E}_{\beta}(\mu_{\beta})+\inf_{\rho} \frac{1}{2}\mathcal{E}^{\neq}_{\square_{R N^{-\lambda}}}({\rm emp}_{N}'(Y_{N})+\rho - \mu_{\beta}) -\\
    & \frac{1}{N \beta}\int_{\mathbf{R}^{d}} \log(\mu_{\beta}) d( \rho + {\rm emp}_{N}'(Y_{N}))+  \frac{1}{N \beta} {\rm ent}[\rho] \bigg).
\end{split}
\end{equation}
This is done in step 1. Steps 2 and 3 simplify this expression, and show that either the electric energy or the entropy dominates, depending on whether $\gamma > \gamma^{*}$ or $\gamma<\gamma^{*}.$

{\textbf{\textit{Step 1}}

We start with the characterization
\begin{equation}
\label{eq:genchar}
       -\frac{\log (Z_{N, \beta}^{Y_{N}})}{ \beta } =  \min_{\mu \in \mathcal{P}([\mathbf{R}^{d} \setminus \square_{R N^{-\frac{1}{d}}}]^{N})} \mathcal{F}(\mu), 
\end{equation}
in this equation,
\begin{equation}
   \mathcal{F}(\mu) = \int_{\mathbf{R}^{N d}} \mu(Z_{N})  \mathcal{H}^{Y_{N}}_{N}(Z_{N}) \, \mathrm d Z_{N} + \frac{1}{\beta} \int_{\mathbf{R}^{d N}} \mu(Z_{N}) \log(\mu(Z_{N})) \ \mathrm d Z_{N}
\end{equation}
where
\begin{equation}
    \mathcal{H}^{Y_{N}}_{N}\left( Z_{N}\right)= \frac{1}{2}  \sum_{y_{i}, y_{j} \in \square_{R N^{-\frac{1}{d}}}}g\left( y_{i}-y_{j} \right)+\frac{1}{2} \sum_{z_{i}, z_{j} \notin \square_{R N^{-\frac{1}{d}}}}g\left( z_{i}-z_{j} \right) +N \sum_{i} \widetilde{V}\left( x_{i} \right),
\end{equation}
and
\begin{equation}
     \widetilde{V} =V+ \frac{1}{2} {\rm emp}_{N}'(Y_{N}) \ast g. 
\end{equation}
This is a particular case of a characterization of the partition function which is valid in general, see for example \cite{rougerie2016higher}. 

We now define 
\begin{equation}
    \mu_{\beta}^{Y_{N}} = \text{argmin}_{\rho \in \mathcal{P}(\mathbf{R}^{d} \setminus \square_{R N^{-\frac{1}{d}}})}   \overline{\mathcal{I}}_{V}^{Y_{N}} (\rho) + \frac{1}{ \left( N - i_{N} \right) \beta} \text{ent}[\rho],
\end{equation}
where
\begin{equation}
\begin{split}
    \overline{\mathcal{I}}_{V}^{Y_{N}} (\rho)= \frac{1}{2}  \mathcal{E}_{\neq}({\rm emp}_{N}'(Y_{N}))+\frac{1}{2}  \left( 1 - \frac{i_{N}}{N} \right)^{2} \iint_{\mathbf{R}^{d} \times \mathbf{R}^{d}} g(x-y) d\rho_{x} d\rho_{y} +\\
    \left( 1 - \frac{i_{N}}{N} \right)\int \widetilde{V} d \rho.
\end{split}    
\end{equation}
The reason for introducing the extra factors of $1 - \frac{i_{N}}{N}$ is that we need to normalize the total charge outside the cube to be $1$. We then have that probability measures satisfy a splitting formula around $\mu_{\beta}^{Y_{N}}$, analogous to equation \eqref{eq:splittingform}.

   We also have that $ \mu_{\beta}^{Y_{N}} $ satisfies the EL equation
\begin{equation}\label{ELthermaleqmeasure}
     \left( 1 - \frac{i_{N}}{N} \right)^{2} h^{\mu_{\beta}^{Y_{N}}} +  \left( 1 - \frac{i_{N}}{N} \right)\widetilde{V} + \frac{1}{ \left( N - i_{N} \right) \beta} \log(\mu_{\beta}^{Y_{N}}) = k,
\end{equation}
for some constant $k.$ In order to find $k,$ we can multiply equation \eqref{ELthermaleqmeasure} by $\mu_{\beta}^{Y_{N}},$ integrate, and use that $\mu_{\beta}^{Y_{N}}$ has integral $1$. The result is 
\begin{equation}
    k = \overline{\mathcal{I}}_{V}^{Y_{N}} (\mu_{\beta}^{Y_{N}}) + \frac{1}{N \beta} \text{ent}[\mu_{\beta}^{Y_{N}}] + \frac{1}{2} \left( 1 - \frac{i_{N}}{N} \right) \mathcal{E}(\mu_{\beta}^{Y_{N}}). 
\end{equation} 

Plugging in $\mu =( \mu_{\beta}^{Y_{N}})^{\otimes N - i_{N}}$ as a test function in equation \eqref{eq:genchar} we get
\begin{equation} \label{variationalcharacter}
    \begin{split}
         -\frac{\log (Z_{N, \beta}^{Y_{N}})}{\beta } &\leq  \mathcal{F}(( \mu_{\beta}^{Y_{N}})^{\otimes N - i_{N}})\\
         & = N^{2} \left( \overline{\mathcal{I}}_{V}^{Y_{N}} (\mu_{\beta}^{Y_{N}}) + \frac{1}{ \left( N - i_{N} \right) \beta} \text{ent}[\mu_{\beta}^{Y_{N}}]  \right) - \left( \frac{N-i_{N}}{2} \right) \mathcal{E}(\mu_{\beta}^{Y_{N}})\\
         & \leq N^{2} \left( \min_{\rho} \overline{\mathcal{I}}_{V}^{Y_{N}} (\rho) + \frac{1}{ \left( N - i_{N} \right) \beta} \text{ent}[\rho]  \right) - C N .
    \end{split}
\end{equation}
The negative term of order $\left( N - i_{N} \right)$ is due to the fact that there are $\frac{(N-i_{N})(N-i_{N}-1)}{2}$ pair of particles, and not $\frac{(N-i_{N})^{2}}{2}$ pairs.

Finally, note that the last term in equation \eqref{variationalcharacter} is (up to a negligible error) equivalent to 
\begin{equation}
     \min_{\rho \in \mathcal{M}^{+}(\mathbf{R}^{d} \setminus \square_{R N^{-\frac{1}{d}}})}   \widetilde{\mathcal{I}}_{V}^{Y_{N}} (\rho) + \frac{1}{N\beta} \text{ent}[\rho],
\end{equation}
where
\begin{equation}
    \widetilde{\mathcal{I}}_{V}^{Y_{N}} (\rho)= \frac{1}{2} \mathcal{E}_{\neq}({\rm emp}_{N}'(Y_{N}))+ \frac{1}{2}  \iint_{\mathbf{R}^{d} \times \mathbf{R}^{d}} g(x-y) d\rho_{x} d\rho_{y} + \int \widetilde{V} d \rho,   
\end{equation}
and the minimum is taken over $\rho$ satisfying $|\rho| = 1 - \frac{i_{N}}{N}$.}

Using the splitting formula for the thermal equilibrium measure (Proposition \ref{thermalsplittingformula}) we have that 
\begin{equation}\label{formulaforpartitionfunc}
\begin{split}
    &-\log ( Z_{N, \beta}^{Y_{N}}) \leq N^{2} \beta \bigg( \mathcal{E}_{\beta}(\mu_{\beta})+\inf_{\rho} \frac{1}{2}\mathcal{E}^{\neq}_{\square_{R N^{-\lambda}}}({\rm emp}_{N}'(Y_{N})+\rho - \mu_{\beta}) - \\
    & \frac{1}{N \beta}\int_{\mathbf{R}^{d}} \log(\mu_{\beta}) d( \rho + {\rm emp}_{N}'(Y_{N}))+  \frac{1}{N \beta} {\rm ent}[\rho]  \bigg),
\end{split}
\end{equation}
where the infimum is taken over all measures $\rho$ on $\mathbf{R}^{d} \setminus \square_{RN^{-\lambda}}$ which are positive and such that 
\begin{equation}
    |\rho|=1-\frac{i_{N}}{N}.
\end{equation}

\textbf{\textit{Step 2}} 

This step is divided into two cases. The case $\gamma < \gamma^{*}$ and the case $\gamma > \gamma^{*}$. First we deal with the case $\gamma < \gamma^{*}$.

\textbf{\textit{Substep 2.1:}} Regime $\gamma < \gamma^{*}$.  

In this case, we claim that
\begin{equation}
    N^{\lambda(d+2)} \left( -\frac{1}{N^{2} \beta}\log (  Z_{N, \beta}^{Y_{N}}) -  \mathcal{E}_{\beta}(\mu_{\beta}) \right) \leq  \frac{1}{2}\mathbf{F}_{\square_{R}}^{\mu_{V}(0)}(\nu)+C \eta +o_{N}(1),
\end{equation}
where $ \mathbf{F}_{\square_{R}}^{\mu_{V}(0)}$ is given by \eqref{defofPhibeta}. The proof will be divided into $3$ further subsubsteps. 

\textbf{\textit{Subsubstep 2.1.1}}

We now begin the proof of the claim. Using the scaling relations
\begin{equation}
    {\rm ent}[\rho] = N^{-\lambda d }    {\rm ent}[\rho^{N^{\lambda}}] 
\end{equation}
and 
\begin{equation}
    \mathcal{E} (\rho) = N^{-\lambda(d+2)}  \mathcal{E} (\rho^{N^{\lambda}}),  
\end{equation}
we can rewrite equation \eqref{formulaforpartitionfunc} as 
\begin{equation} \label{formulaforpartitionfuncreescaled}
\begin{split}
    &-\frac{1}{N^{2} \beta}\log (  Z_{N, \beta}^{Y_{N}}) -  \mathcal{E}_{\beta}(\mu_{\beta}) \leq \\
    &\inf_{\rho} \Bigg( \frac{N^{-\lambda(d+2)}}{2} \mathcal{E}^{\neq}_{\square_{R}}({\rm lemp}_{N}(Y_{N})+\rho - \mu_{\beta}^{N^{\lambda}}) -\\
    &\frac{1}{N^{1+\lambda d} \beta}\int_{\mathbf{R}^{d}} \log(\mu_{\beta}^{N^{\lambda}}) d( \rho + {\rm lemp}_{N}(Y_{N}))+\frac{1}{\beta N^{1+\lambda d}} {\rm ent}[\rho] + C N^{-\frac{2}{d}} \Bigg),
\end{split}    
\end{equation}
where the infimum is taken over all $\rho \in \mathcal{M}^{+}(\mathbf{R}^{d} \setminus \square_{R})$ such that 
\begin{equation}
    |\rho| =  N^{\lambda d }\left( 1-\frac{i_{N}}{N} \right).
\end{equation}

Next we will argue that we can deal with $\nu$ instead of ${\rm lemp}_{N}(Y_{N})$ since we make a small error when approximating $\nu$ by ${\rm lemp}_{N}(Y_{N})$. This will be the subject of the next subsubstep.

\textbf{\textit{Substep 2.1.2}}

First of all, note that the constraint $ |\rho| =  N^{\lambda d }\left( 1-\frac{i_{N}}{N} \right)$ can be replaced by the constraint $ |\rho| =  N^{\lambda d } - |\nu|$ while making a negligible error because of equation \eqref{eq:massmatching}. We now introduce the functionals $\mathbf{L}^{*}$ and  $\mathbf{L}^{*}_{\neq}$, defined for measures $\nu$ and ${\rm lemp}_{N}$ on $\square_{R}$ as 
\begin{equation}\label{defofPhi*}
\begin{split}
    &\mathbf{L}^{*}(\nu)= \\
    &\inf_{\rho}  \frac{1}{2}\mathcal{E}(\nu+\rho - \mu_{\beta}^{N^{\lambda}}) - \frac{N^{\lambda(d+2)}}{N^{1+\lambda d} \beta}\int_{\mathbf{R}^{d}} \log(\mu_{\beta}^{N^{\lambda}}) d( \rho + \nu)+   \frac{N^{\lambda(d+2)}}{\beta N^{1+\lambda d}} {\rm ent}[\rho],
\end{split}    
\end{equation}
and
\begin{equation}\label{defofPhi*2}
\begin{split}
    &\mathbf{L}^{*}_{\neq}({\rm lemp}_{N})=\inf_{\rho}  \frac{1}{2}\mathcal{E}^{\neq}_{\square_{R}}( {\rm lemp}_{N}(Y_{N}))+\rho - \mu_{\beta}^{N^{\lambda}}) - \\
    & \frac{N^{\lambda(d+2)}}{N^{1+\lambda d} \beta}\int_{\mathbf{R}^{d}} \log(\mu_{\beta}^{N^{\lambda}}) d( \rho +  {\rm lemp}_{N}(Y_{N})))+   \frac{N^{\lambda(d+2)}}{\beta N^{1+\lambda d}} {\rm ent}[\rho],
\end{split}    
\end{equation}
where the infimum in equations \eqref{defofPhi*} and \eqref{defofPhi*2} is taken over all $\rho \in \mathcal{M}^{+}(\mathbf{R}^{d} \setminus \square_{R})$ such that 
\begin{equation}
    |\rho| = N^{\lambda d } - |\nu|.
\end{equation}

Given $\nu \in \mathcal{M}^{+}(\square_{R})$ and $Y_{N} \in \square_{R}^{i_{N}}$, let 
\begin{equation}
\begin{split}
    &\rho^{*}_{\nu}= \\
    &\operatorname{argmin}_{\rho}  \frac{1}{2}\mathcal{E}(\nu+\rho - \mu_{\beta}^{N^{\lambda}}) - \frac{N^{\lambda(d+2)}}{N^{1+\lambda d} \beta}\int_{\mathbf{R}^{d}} \log(\mu_{\beta}^{N^{\lambda}}) d( \rho + \nu)+   \frac{N^{\lambda(d+2)}}{\beta N^{1+\lambda d}} {\rm ent}[\rho],
\end{split}    
\end{equation}
and similarly, let $\rho^{*}_{Y_{N}}$ be defined as 
\begin{equation}
\begin{split}
    &\rho^{*}_{Y_{N}}= \inf_{\rho}  \frac{1}{2}\mathcal{E}^{\neq}_{\square_{R}}( {\rm lemp}_{N}(Y_{N}))+\rho - \mu_{\beta}^{N^{\lambda}}) -\\
    & \frac{N^{\lambda(d+2)}}{N^{1+\lambda d} \beta}\int_{\mathbf{R}^{d}} \log(\mu_{\beta}^{N^{\lambda}}) d( \rho +  {\rm lemp}_{N}(Y_{N})))+   \frac{N^{\lambda(d+2)}}{\beta N^{1+\lambda d}} {\rm ent}[\rho],
\end{split}    
\end{equation}
where the infimum is taken over all $\rho \in \mathcal{M}^{+}(\mathbf{R}^{d} \setminus \square_{R})$ such that 
\begin{equation}
    |\rho| = N^{\lambda d } - |\nu|.
\end{equation}
We assume that the infimum is achieved for clarity of exposition. Otherwise, we could repeat the argument up to an arbitrarily small error. 
Then we can use $\rho^{*}_{\nu}$ as a test function in equation \eqref{defofPhi*} and get
\begin{equation}
\begin{split}
    &\mathbf{L}^{*}(\nu) -  \mathbf{L}^{*}_{\neq}({\rm lemp}_{N}(Y_{N})) \leq\\
    &\frac{1}{2}\mathcal{E}(\nu) - \frac{1}{2} \mathcal{E}^{\neq}({\rm lemp}_{N}(Y_{N})) + G(\rho^{*}_{\nu} - \mu_{\beta}^{N^{\lambda}}, {\nu}- {{\rm lemp}_{N}(Y_{N})}) +\\
    & \frac{N^{\lambda(d+2)}}{N^{1+\lambda d} \beta}\int_{\mathbf{R}^{d}} \log(\mu_{\beta}^{N^{\lambda}}) d( {\rm lemp}_{N} - \nu).
\end{split}    
\end{equation}

Similarly, we can use $\rho^{*}_{Y_{N}}$ as test function in equation \eqref{defofPhi*} and get
\begin{equation}
\begin{split}
   &\mathbf{L}^{*}_{\neq}({\rm lemp}_{N}(Y_{N})) - \mathbf{L}^{*}(\nu)   \leq \\
   &\frac{1}{2}\mathcal{E}^{\neq}({\rm lemp}_{N}(Y_{N})) - \frac{1}{2}\mathcal{E}(\nu)  + G(\rho^{*}_{Y_{N}} - \mu_{\beta}^{N^{\lambda}} , {\nu}- {{\rm lemp}_{N}(Y_{N})}) -\\
    & \frac{N^{\lambda(d+2)}}{N^{1+\lambda d} \beta}\int_{\mathbf{R}^{d}} \log(\mu_{\beta}^{N^{\lambda}}) d( {\rm lemp}_{N} - \nu).
\end{split}   
\end{equation}

Using equation \eqref{muclosetoempN} we have that
\begin{equation}
    |\mathcal{E}^{\neq}({\rm lemp}_{N}(Y_{N})) - \mathcal{E}(\nu) | \leq C \eta,
\end{equation}
where $C$ depends on $\nu$. 

Then, since the points are at distance at least $r N^{-\frac{1}{d}}$ from $\partial \square_{R N^{-\lambda}}$, we have by Lemma \ref{smearinglemma1}, that, for $x \notin \square_{R}$
\begin{equation}
    g \ast {\rm lemp} (x) =  g \ast {\rm lemp} \ast \lambda_{r N^{\lambda - \frac{1}{d}}} (x),
\end{equation}
(see Remark \ref{def:lamb} for notation). 

Using Cauchy-Schwartz we get
\begin{equation}
    \begin{split}
       & \mathcal{G}(\rho^{*}_{\nu} - \mu_{\beta}^{N^{\lambda}}, {\nu}- {{\rm lemp}_{N}(Y_{N})}) =\\
        & \mathcal{G}(\rho^{*}_{\nu} - \mu_{\beta}^{N^{\lambda}}, {\nu}- {{\rm lemp}_{N}(Y_{N})}\ast \lambda_{r N^{\lambda-\frac{1}{d}}}) \leq \\
        &\sqrt{\mathcal{E}(\rho^{*}_{\nu} - \mu_{\beta}^{N^{\lambda}}) \mathcal{E}( {\nu}- {{\rm lemp}_{N}(Y_{N})}\ast \lambda_{r N^{\lambda-\frac{1}{d}}})}. 
    \end{split}
\end{equation}

Using now the hypothesis that $\nu$ has $L^{\infty}$ regularity, along with Lemmas \ref{smearinglemma1}, \ref{smearinglemma2},  \ref{smearinglemma3},  \ref{smearinglemma4} we have that
\begin{equation}
    \mathcal{E}( {\nu}- {{\rm lemp}_{N}(Y_{N})}\ast \lambda_{r N^{\lambda-\frac{1}{d}}}) \leq C \eta^{2},
\end{equation}
where $C$ depends on $\nu$ and $r$.

On the other hand, it is easy to see that 
\begin{equation}
    \mathcal{E}(\rho^{*}_{\nu} - \mu_{\beta}^{N^{\lambda}}) \leq C,
\end{equation}
where $C$ depends on $\nu$. Therefore 
\begin{equation}
        \mathcal{G}(\rho^{*}_{\nu} - \mu_{\beta}^{N^{\lambda}}, {\nu}- {{\rm lemp}_{N}(Y_{N})}) \leq C \eta,
\end{equation}
where $C$ depends on $\nu$ and $r$. Similarly,
\begin{equation}
        \mathcal{G}(\rho^{*}_{Y_{N}} - \mu_{\beta}^{N^{\lambda}}, {\nu}- {{\rm lemp}_{N}(Y_{N})}) \leq C \eta,
\end{equation}
where $C$ depends on $\nu$ and $r$.

This implies that
\begin{equation}
    \left| \mathbf{L}^{*}(\nu) - \mathbf{L}^{*}_{\neq}({\rm lemp}_{N}(Y_{N})) \right| \leq C \eta, 
\end{equation}
where $C$ depends on $\nu$.

We have proved that we can deal with $\nu$ instead of ${\rm lemp}_{N}(Y_{N})$ since we make a small error when approximating $\nu$ by ${\rm lemp}_{N}(Y_{N})$. The last subsubstep will consist in proving that
\begin{equation}
\label{eq:subsubstep}
        \limsup_{N \to \infty}\mathbf{L}^{*}(\nu)  \leq  \frac{1}{2} \mathbf{F}_{\square_{R}}^{\mu_{V}(0)}(\nu).
\end{equation}

\textbf{\textit{Substep 2.1.3}}

The proof of equation \eqref{eq:subsubstep} will consist in taking a minimizing sequence of the problem in the RHS, and modifying it so that it is a valid test function to the problem in the LHS. 

Let $\rho_{\epsilon} \geq 0$ be such that 
\begin{equation}
    \int_{\mathbf{R}^{d} } \rho_{\epsilon} + \nu - \mu_{V}(0) dx = 0
\end{equation}
and 
\begin{equation}
    \mathcal{E}(\nu+\rho_{\epsilon} - \mu_{V}(0)) \leq   \mathbf{F}_{\square_{R}}^{\mu_{V}(0)}(\nu) + \epsilon.
\end{equation}

Then for every $\delta > 0$ there exists $T>0$ such that
\begin{equation}\label{errorinmass}
   \left| \int_{B(0,T)} \rho_{\epsilon} + \nu - \mu_{V}(0) dx \right| \leq \delta
\end{equation}
and 
\begin{equation}
    \begin{split}\label{errorinenergy}
        &\left| \mathcal{E} \left(\nu+\rho_{\epsilon} - \mu_{V}(0) \right) - \mathcal{E}\left((\nu+\rho_{\epsilon} - \mu_{V}(0)\right)\mathbf{1}_{B(0,T)}) \right| \leq \delta.
    \end{split}
\end{equation}

Now take a truncated $\rho_{\epsilon}^{\overline{\eta}}$ such that \eqref{errorinenergy} and \eqref{errorinmass} hold with an error $\delta+\overline{\eta}$ in the right hand side, and in addition 
\begin{equation}
    \rho_{\epsilon}^{\overline{\eta}} \in L^{\infty}.
\end{equation}
Note that $\rho_{\epsilon}^{\overline{\eta}}$ exists because the sequence
\begin{equation}
     \rho_{\epsilon} \mathbf{1}_{|\rho_{\epsilon}|<M}
\end{equation}
is bounded, and by Dominated Convergence Theorem, its integral converges to the integral of $\rho_{\epsilon}$ as $M \to \infty$.

Now define $\rho_{\epsilon,T}^{\overline{\eta}}$ as 
\begin{equation}
    \rho_{\epsilon,T}^{\overline{\eta} } = \rho_{\epsilon}^{\overline{\eta}} \mathbf{1}_{B(0,T)} + \mu_{\beta}^{N^{\lambda}}.
\end{equation}

 Note that
\begin{equation}
      \left|   \int_{\mathbf{R}^{d}} -\log(\mu_{\beta}^{N^{\lambda}}) d( \rho_{\epsilon,T}^{\overline{\eta}} + \nu)+  {\rm ent}[\rho_{\epsilon,T}^{\overline{\eta}}] \right| \leq \\
      C ,
\end{equation}
where $C$ depends on $T$ and $\overline{\eta}$ but does not depend on $N.$ Since we are in the regime $\gamma < \gamma^{*},$ we have that 
\begin{equation}
    1+\lambda d - \gamma > \lambda(d+2),
\end{equation}
and therefore
\begin{equation}
    \lim_{N \to \infty} \frac{N^{\lambda(d+2)}}{N^{1+\lambda d}\beta}  \left|   \int_{\mathbf{R}^{d}} -\log(\mu_{\beta}^{N^{\lambda}}) d( \rho_{\epsilon,T}^{\overline{\eta}} + \nu)+  {\rm ent}[\rho_{\epsilon,T}^{\overline{\eta}}] \right|  =0.
\end{equation}

Using $\rho_{\epsilon,T}^{\overline{\eta}} $ as a test function in the definition of $\mathbf{L}^{*}(\nu)$, and appealing once again to Remark \ref{rem:unifconv} we have that
\begin{equation}
    \begin{split}
       & \limsup_{N \to \infty} \mathbf{L}^{*}(\nu)\\
        \leq & \limsup_{N \to \infty} \frac{1}{2}\mathcal{E} (\nu+ \rho_{\epsilon,T}^{\overline{\eta}} -\mu_{\beta}^{N^{\lambda}} ) +\\
        & \quad\frac{N^{\lambda(d+2)}}{N^{1+\lambda d}\beta}  \left|   \int_{\mathbf{R}^{d}} -\log(\mu_{\beta}^{N^{\lambda}}) d( \rho_{\epsilon,T}^{\overline{\eta}} + \nu)+  {\rm ent}[\rho_{\epsilon, T}^{\overline{\eta}}] \right|\\
        \leq   & \limsup_{N \to \infty} \frac{1}{2}\mathcal{E}((\nu+\rho_{\epsilon, R}^{\overline{\eta}} - \mu_{\beta}^{N^{\lambda}} ) \mathbf{1}_{B(0,T)}) \\
        = & \frac{1}{2}\mathcal{E}\left((\nu+\rho_{\epsilon, R}^{\overline{\eta}}- \mu_{V}(0))\right)\mathbf{1}_{B(0,T)})  \\
        \leq  &\frac{ \mathbf{F}_{\square_{R}}^{\mu_{V}(0)}(\nu)}{2}   + \epsilon+ \delta + \overline{\eta}.
    \end{split}
\end{equation}

Since $\epsilon, \delta, \overline{\eta}$ are arbitrary, we conclude
\begin{equation}
        \limsup_{N \to \infty}\mathbf{L}^{*}(\nu)  \leq \frac{1}{2}   \mathbf{F}_{\square_{R}}^{\mu_{V}(0)}(\nu).
\end{equation}

The proof of substep 2.1 is now complete. 

\textbf{\textit{Substep 2.2}}  Now we deal with the case $\gamma > \gamma^{*}.$ In this case we go back to working in unreescaled coordinates.

We start with formula \eqref{formulaforpartitionfunc}. Since in the regime $\gamma > \gamma^{*},$ we expect the term
\begin{equation}
 \mathcal{E}^{\neq}_{\square_{R}}({\rm emp}_{N}'(Y_{N})+\rho - \mu_{\beta}) 
\end{equation}
to be negligible, we focus on the remaining part of the functional, i.e.
\begin{equation}\label{problementropypotential}
    \inf_{\rho} - \int_{\mathbf{R}^{d}} \log(\mu_{\beta}) d( \rho + {\rm emp}_{N}'(Y_{N}))+  {\rm ent}[\rho],
\end{equation}
where the infimum is taken over all measures $\rho$ on $\mathbf{R}^{d} \setminus \square_{RN^{-\lambda}}$ which are positive and such that 
\begin{equation}
    |\rho|=1-\frac{i_{N}}{N}.
\end{equation}
The minimizer in equation \eqref{problementropypotential} can be easily found by adding a Lagrange multiplier for the mass constraint.
It can be easily checked that the unique minimizer of \eqref{problementropypotential} in the corresponding space is given by $\rho^{*}$, where
\begin{equation}
    \rho^{*} = \alpha \mu_{\beta} \mathbf{1}_{\mathbf{R}^{d} \setminus \square_{R N^{-\lambda}}},
\end{equation}
where
\begin{equation}
    \alpha= \frac{ 1-\frac{i_{N}}{N}}{\int_{\mathbf{R}^{d} \setminus \square_{R N^{-\lambda}}} \mu_{\beta} dx}.
\end{equation}

Using the identity
\begin{equation}
        {\rm ent}[A \mu] = A {\rm ent}[ \mu]+ A \log(A) \int  \mu dx,
\end{equation}
valid for any $A \in \mathbf{R}^{+}$, we have that
\begin{equation}
\begin{split}
    &  \int_{\mathbf{R}^{d}} \log(\mu_{\beta}) d( \rho^{*} + {\rm emp}_{N}'(Y_{N}))+ {\rm ent}[\rho^{*}] =  \\
   &   \int_{\mathbf{R}^{d}} \log(\mu_{\beta}) d( {\rm emp}_{N}'(Y_{N})) + \left( 1-\frac{i_{N}}{N} \right) \log(\alpha) .
\end{split}    
\end{equation}

It can be checked that, { as a consequence of equation \eqref{eq:massmatching}}, $\lim_{N \to \infty} \alpha =1$, and therefore we may use the approximation $\log \alpha \simeq \alpha-1$. Proceeding as in the proof of Lemma \ref{entropyterm} and using equation \eqref{eq:massmatching}, we have that 
\begin{equation}
  \lim_{N \to \infty} N^{\lambda d} \left( 1-\frac{i_{N}}{N} \right) \log(\alpha) = R^{d} \mu_{V}(0) - |\nu|.
\end{equation}
Note that 
\begin{equation}
 \lim_{N \to 0} \frac{N^{1 + \lambda d } \beta}{N^{\lambda(d+2)}}  \mathcal{E}(\rho^{*} - \mu_{\beta}) =0
\end{equation}
since we are in the regime $\gamma > \gamma^{*}$. Using again formula \eqref{formulaforpartitionfunc} and switching to rescaled coordinates, we have
\begin{equation}
\begin{split}
& N^{1+\lambda d} \beta \left(  -\frac{1}{ N^{2} \beta}\log ( Z_{N, \beta}^{Y_{N}}) - \mathcal{E}_{\beta}(\mu_{\beta}) \right) \leq  \\
& \quad  \int_{\mathbf{R}^{d}} \log(\mu_{\beta}^{N^{\lambda}}) d( {\rm lemp}_{N}(Y_{N})) -|\nu|+R^{d} \mu_{V}(0) + o_{N}(1),
\end{split} 
\end{equation}
where $o(1)$ is independent of $Y_{N}.$

Lemma \ref{boundonZYN} is proved for $\gamma > \gamma^{*}$.

\textbf{\textit{Step 3}}  

This step only deals with the case $\gamma < \gamma^{*}.$ Once again we work with rescaled coordinates. 

We now claim that for any measure $\nu$ on $\square_{R}$ such that $\mathcal{E}(\nu) < \infty$ we have
\begin{equation}
      \mathbf{F}_{\square_{R}}^{\mu_{V}(0)}(\nu) =  \Phi_{\square_{R}}^{\mu_{V}(0)}(\nu) .
\end{equation}
In other words, we claim that we can drop the mass constraint. We now prove the claim. Since clearly
\begin{equation}
      \mathbf{F}_{\square_{R}}^{\mu_{V}(0)}(\nu) \geq  \Phi_{\square_{R}}^{\mu_{V}(0)}(\nu) , 
\end{equation}
we will prove that
\begin{equation}
      \mathbf{F}_{\square_{R}}^{\mu_{V}(0)}(\nu) \leq  \Phi_{\square_{R}}^{\mu_{V}(0)}(\nu). 
\end{equation}
In order to prove this claim, we reformulate the definition of $\Phi_{\square_{R}}^{\mu_{V}(0)}(\nu)$ as 
as 
\begin{equation}
   \Phi_{\square_{R}}^{\mu_{V}(0)}( \nu) = \inf_{\rho} \mathcal{E} (\nu-\mu_{V}(0) \mathbf{1}_{\square_{R}} + \rho)
\end{equation}
where the infimum is taken over all $\rho$ such that $\rho$ is supported in $\mathbf{R}^{d} \setminus \square_{R}$ and $\rho \geq -\mu_{V}(0).$

Let $\epsilon > 0$ and let $\rho_{\epsilon} \in C^{\infty}_{0}$ be such that $\rho_{\epsilon}$ is supported in $\mathbf{R}^{d} \setminus \square_{R}$, $\rho_{\epsilon} \geq -\mu_{V}(0)$ and 
\begin{equation}
     \mathcal{E} (\nu-\mu_{V}(0) \mathbf{1}_{\square_{R}} + \rho_{\epsilon}) \leq \Phi_{\square_{R}}^{\mu_{V}(0)}( \nu) + \epsilon. 
\end{equation}
Let 
\begin{equation}
    K = \text{supp} (\rho_{\epsilon}),
\end{equation}
and let 
\begin{equation}
    E =  |\rho_{\epsilon}| - (|\nu| - R^{d} \mu_{V}(0)).
\end{equation}
Let $R_{n}$ be a sequence such that $R_{n}$ tends to $\infty$ monotonically, and \begin{equation}
    K \subset B(0,R_{1}).
\end{equation}
Define 
\begin{equation}
    \rho_{\epsilon}^{n} = \rho_{\epsilon} + \frac{E}{\mathcal{L} (B(0,2 R_{n}) \setminus B(0,R_{n}))} \mathbf{1}_{B(0,2 R_{n}) \setminus B(0,R_{n})},
\end{equation}
where $\mathcal{L}$ denotes the Lebesgue measure. Then it's easy to see that
\begin{equation}
     | \rho_{\epsilon}^{n} | = - (|\nu| - R^{d} \mu_{V}(0)),
\end{equation}
and 
\begin{equation}
    \lim_{n \to \infty}  \mathcal{E} (\nu-\mu_{V}(0) \mathbf{1}_{\square_{R}} + \rho_{\epsilon}^{n}) =  \mathcal{E} (\nu-\mu_{V}(0) \mathbf{1}_{\square_{R}} + \rho_{\epsilon}).
\end{equation}
Therefore
\begin{equation}
      \mathbf{F}_{\square_{R}}^{\mu_{V}(0)}(\nu) \leq  \Phi_{\square_{R}}^{\mu_{V}(0)}(\nu) + \epsilon. 
\end{equation}
Since $\epsilon$ is arbitrary, we conclude that
\begin{equation}
      \mathbf{F}_{\square_{R}}^{\mu_{V}(0)}(\nu) =  \Phi_{\square_{R}}^{\mu_{V}(0)}(\nu). 
\end{equation}

\end{proof}

We can now complete the proof of Theorem \ref{maintheorem} by giving the lower bound.

\begin{proof}[Proof of Theorem \ref{maintheorem}, lower bound]
We start with the case $\gamma < \gamma^{*}.$ Let $\nu$ be a measure on $\square_{R}.$ Using a density argument, we may assume that $\nu \in L^{\infty}$ and ${\rm ent}[\nu] < \infty.$ Let $\epsilon, \eta, \delta
>0$ and let $\Lambda_{\delta}^{\eta, \epsilon}$ be as in Lemma \ref{goodenergygoodvolume} with $\Omega = \square_{R}$ and $\mu_{N} = \frac{\mu_{\beta}^{N^{\lambda}}|_{\square_{R}}}{|\mu_{\beta}^{N^{\lambda}}|_{\square_{R}}|}$, $n= |\nu| N^{1 - \lambda d}$ (rounded to an integer) and $\overline{\nu}=\frac{\nu}{|\nu|}$. Note that equation \eqref{muclosetoempN} is satisfied with this choice of $n$. We claim that equation \eqref{goodenergy} implies that there exists $C$ which depends on $\nu$ such that for any $X_{N} \in \Lambda_{\delta}^{\eta, \epsilon}$, 
\begin{equation}
    \Phi^{\mu_{V}(0)}_{\square_{R}, \neq} ({\rm emp}_{N}(X_{N})) \leq  \Phi^{\mu_{V}(0)}_{\square_{R}}(\nu) + C \eta.
\end{equation}
This is because for 
\begin{equation}
    \rho^{*} = \operatorname{argmin}_{\rho \in \mathcal{M}^{+}(\mathbf{R}^{d} \setminus \square_{R})} \mathcal{E}(\nu+\rho -  \mu_{V}(0))
\end{equation}
we have 
\begin{equation}
    \begin{split}
        \Phi^{\mu_{V}(0)}_{\square_{R}, \neq} ({\rm emp}_{N}(X_{N})) &\leq \mathcal{E}^{\neq}({\rm emp}_{N}(X_{N})+\rho^{*} -  \mu_{V}(0))\\
        &= \mathcal{E}^{\neq}({\rm emp}_{N}(X_{N})+ \nu - \nu +\rho^{*} -  \mu_{V}(0))\\
        &=  \mathcal{E}(\nu+\rho^{*} -  \mu_{V}(0))+\\
        &\quad 2\mathcal{G}( \nu+\rho^{*} -  \mu_{V}(0), {\rm emp}_{N}(X_{N})- \nu  ) + \\
        &\quad \mathcal{E}^{\neq}({\rm emp}_{N}(X_{N})- \nu)\\
        &\leq \Phi_{\square_{R}}^{\mu_{V}(0)}(\nu) + C \eta + \eta^{2},
    \end{split}
\end{equation}

where $C$ depends on $\nu$. Using Lemma \ref{boundonZYN} and equation \eqref{nextorderpartitionfunctionissmall} we then have that

\begin{equation}
    \begin{split}
        &\mathbf{P}_{N, \beta}({\rm lemp}_{N}(Y_{N}) \in B(\nu, \epsilon)) \geq \\
        &\mathbf{P}_{N, \beta}(Y_{N}^{\lambda} \in  \Lambda_{\delta}^{\eta, \epsilon})=\\
        & \frac{1}{Z_{N,\beta}} \int_{Y_{N} \in  \Lambda_{\delta}^{\eta, \epsilon}} Z_{N, \beta}^{Y_{N}} d Y_{N} \geq\\
        & \int_{Y_{N} \in  \Lambda_{\delta}^{\eta, \epsilon}} \exp\left( - \beta N^{2 - \lambda(d+2)} \left[ \frac{1}{2}\Phi_{\square_{R}}^{\mu_{V}(0)} (\nu) + C\eta + o_{N}(1) \right]  \right) d Y_{N} =\\
        &  \exp\left( - \beta N^{2 - \lambda(d+2)} \left[\frac{1}{2}\Phi_{\square_{R}}^{\mu_{V}(0)} (\nu) + C\eta + o_{N}(1)\right]  \right) \int_{Y_{N} \in  \Lambda_{\delta}^{\eta, \epsilon}}  d Y_{N}.
    \end{split}
\end{equation}

Since we are in the regime $\gamma < \gamma^{*}$ we have
\begin{equation}
    \begin{split}
        \left| \log \left( \int_{Y_{N} \in  \Lambda_{\delta}^{\eta, \epsilon}}  d Y_{N} \right) \right| &\leq C N^{1 - \lambda d} {\rm ent}[\nu]\\
        &= o(\beta N^{2 - \lambda(d+2)}).
    \end{split}
\end{equation}

Therefore
\begin{equation}
   \liminf_{N \to \infty} \frac{1}{\beta N^{2 - \lambda(d+2)}} \log \left( \mathbf{P}_{N, \beta}({\rm lemp}_{N}(Y_{N}) \in B(\nu, \epsilon)) \right) \geq  - \frac{1}{2} \Phi_{\square_{R}}^{\mu_{V}(0)} (\nu)-C \eta.
\end{equation}
Since $\eta$ is arbitrary, we can conclude that 
\begin{equation}
  \liminf_{N \to \infty} \frac{1}{\beta N^{2 - \lambda(d+2)}} \log \left( \mathbf{P}_{N, \beta}({\rm lemp}_{N}(Y_{N}) \in B(\nu, \epsilon)) \right) \geq  -  \frac{1}{2}\Phi_{\square_{R}}^{\mu_{V}(0)} (\nu).
\end{equation}

Now we proceed with the case $\gamma > \gamma^{*}.$ Let $\nu$ be a positive measure in $\square_{R}$, let $\epsilon, \eta, \delta
>0$ and let $\Lambda_{\delta}^{\eta, \epsilon}$ be as in Lemma \ref{goodenergygoodvolume} with $\Omega = \square_{R}$ and $\mu_{N} = \frac{\mu_{\beta}^{N^{\lambda}}|_{\square_{R}}}{|\mu_{\beta}^{N^{\lambda}}|_{\square_{R}}|}$, $n= |\nu| N^{1 - \lambda d}$ (rounded to an integer) and $\overline{\nu}=\frac{\nu}{|\nu|}$. Then, starting as in the previous case, we have
\begin{equation}
    \begin{split}
        \mathbf{P}_{N, \beta}({\rm lemp}_{N}(Y_{N}) \in B(\nu, \epsilon)) &\geq  \mathbf{P}_{N, \beta}(Y_{N}^{\lambda} \in  \Lambda_{\delta}^{\eta, \epsilon})\\
        &= \frac{1}{Z_{N, \beta}} \int_{Y_{N} \in  \Lambda_{\delta}^{\eta, \epsilon}} Z_{N, \beta}^{Y_{N}} d Y_{N}.
    \end{split}
\end{equation}    

We then have that
\begin{equation}
    \begin{split}
        &\liminf_{N \to \infty} \frac{1}{N^{1-\lambda d}} \log \left( \mathbf{P}_{N, \beta}({\rm lemp}_{N}(Y_{N}) \in B(\nu, \epsilon)) \right) \geq \\
        &\liminf_{N \to \infty} \frac{1}{Z_{N,\beta}} \frac{1}{N^{1-\lambda d}} \log \left(   \int_{Y_{N} \in  \Lambda_{\delta}^{\eta, \epsilon}} Z_{N, \beta}^{Y_{N}} d Y_{N} \right) =\\
        & \liminf_{N \to \infty} \frac{1}{N^{1-\lambda d}} \log\\
        &\quad  \left(  \int_{Y_{N} \in  \Lambda_{\delta}^{\eta, \epsilon}} \exp \left( - N^{1 - \lambda d} \left[\int_{\square_{R}} \log(\mu_{\beta}^{N^{\lambda}}) d \, {\rm lemp}_{N} \right) d Y_{N} - |\nu|+R^{d} \mu_{V}(0) \right]\right).
    \end{split}
\end{equation}

Recalling that
\begin{equation}
     {\rm lemp}_{N}(Y_{N}) = \frac{1}{N^{1 - \lambda d}} \sum_{i=1}^{i_{N}} \delta_{y_{i}^{\lambda}},
\end{equation}
and $i_{N} = N^{1 - \lambda d}|\nu|$ (rounded to an integer), we have that 
\begin{equation}
    \begin{split}
        &\liminf_{N \to \infty} \frac{1}{N^{1-\lambda d}} \log \left( \mathbf{P}_{N, \beta}({\rm lemp}_{N}(Y_{N}) \in B(\nu, \epsilon)) \right) \geq \\
        & \liminf_{N \to \infty}  \frac{1}{N^{1-\lambda d}} \log\\
        &\quad \left(  \int_{Y_{N} \in  \Lambda_{\delta}^{\eta, \epsilon}} \exp \left( - N^{1 - \lambda d} \left[\int_{\square_{R}} \log(\mu_{\beta}^{N^{\lambda}}) d \, {\rm lemp}_{N} \right) d Y_{N} -|\nu|+R^{d} \mu_{V}(0) \right]\right)=\\
        &  |\nu|-R^{d} \mu_{V}(0) + \liminf_{N \to \infty} \frac{1}{N^{1-\lambda d}} \log \left( \int_{Y_{N} \in  \Lambda_{\delta}^{\eta, \epsilon}}  \Pi_{i=1}^{i_{N}} \mu_{\beta}^{N^{\lambda}}(y_{i}) dY_{N}  \right).
    \end{split}
\end{equation}

By construction we have that 
 \begin{equation}
    \liminf_{N \to \infty} \frac{1}{N^{1-\lambda d}} \log \left( \int_{Y_{N} \in  \Lambda_{\delta}^{\eta, \epsilon}} \Pi_{i=1}^{i_{N}} \mu_{\beta}^{N^{\lambda}}(y_{i}) dy_{{i}} \right) \geq -  \liminf_{N \to \infty}{\rm ent}[\nu|\mu_{\beta}^{N^{\lambda}}] - \delta. 
 \end{equation}
Combining the last equation with Remark \ref{rem:unifconv} we have
 \begin{equation}
     \liminf_{N \to \infty} \frac{1}{N^{1-\lambda d}} \log \left( \int_{Y_{N} \in  \Lambda_{\delta}^{\eta, \epsilon}} \Pi_{i=1}^{i_{N}} \mu_{\beta}^{N^{\lambda}}(y_{i}) dy_{{i}} \right) \geq -{\rm ent}[\nu|\mu_{V}(0) \mathbf{1}_{\square_{R}}] - \delta, 
 \end{equation}
and therefore
\begin{equation}
\begin{split}
        &\liminf_{N \to \infty} \frac{1}{N^{1-\lambda d}} \log \left(  \mathbf{P}_{N, \beta} \left({\rm lemp}_{N}(Y_{N}) \in B(\nu, \epsilon) \right) \right) \geq \\
        &-{\rm ent}[\nu|\mu_{V}(0)\mathbf{1}_{\square_{R}}]  +|\nu|-R^{d} \mu_{V}(0)  - \delta. 
\end{split}        
\end{equation}

Since $\delta$ is arbitrary, we can conclude.  

\end{proof}

\section{Proof of statement about regime $\gamma = \gamma^{*}$}

In this section, we prove the third part of Theorem \ref{maintheorem}, which we repeat here for convenience: 
If $\gamma=\gamma^{*}$ (critical regime) and $\nu \in L^{\infty}$ then
     \begin{equation}
         \lim_{\epsilon \to 0} \limsup_{N \to \infty} \left( \frac{1}{\beta N^{2 - \lambda(d+2)} } \log \left( \mathbf{P}_{N,\beta} ({\rm lemp}_{N} \in {B}(\nu, \epsilon)) \right) +\mathcal{T}^{N}_{\lambda} (\nu) \right) = 0.
     \end{equation}
     Similarly,
     \begin{equation}
         \lim_{\epsilon \to 0} \liminf_{N \to \infty} \left( \frac{1}{\beta N^{2 - \lambda(d+2)} } \log \left( \mathbf{P}_{N,\beta} ({\rm lemp}_{N} \in  {B}(\nu, \epsilon)) \right) +\mathcal{T}^{N}_{\lambda} (\nu) \right) = 0.
     \end{equation}
     
Before starting the proof, we note that since we are in the critical regime, we have $\beta N^{2 - \lambda(d+2)} = N^{1 + \lambda d}$.      
     
\begin{proof}[Proof of $\liminf$ inequality]

 Let $\nu$ be a positive measure in $\square_{R}$, let $\epsilon, \eta, \delta
>0$ and let $\Lambda_{\delta}^{\eta, \epsilon}$ be as in Lemma \ref{goodenergygoodvolume} with $\Omega = \square_{R}$ and $\mu_{N} = \frac{\mu_{\beta}^{N^{\lambda}}|_{\square_{R}}}{|\mu_{\beta}^{N^{\lambda}}|_{\square_{R}}|}$, $n= |\nu| N^{1 - \lambda d}$ (rounded to an integer) and $\overline{\nu}=\frac{\nu}{|\nu|}$. Then we have that
\begin{equation}\label{firstestimategamma=gamma*}
    \frac{1}{\beta N^{2 - \lambda(d+2)}}\log\left(\mathbf{P}_{N, \beta} \left( {\rm lemp}_{N} \in  {B}(\nu, \epsilon) \right)\right) \geq  \frac{1}{\beta N^{2 - \lambda(d+2)}} \log \left( \int_{\Lambda_{\delta}^{\eta, \epsilon}} Z_{N,\beta}^{Y_{N}} dY_{N} \right).
\end{equation}

Using equation \eqref{formulaforpartitionfuncreescaled}, and the hypothesis that $\gamma = \gamma^{*},$ we have
\begin{equation}
\label{eq:critregime}
\begin{split}
    &-\frac{1}{N^{2} \beta}\log (  Z_{N, \beta}^{Y_{N}}) -  \mathcal{E}_{\beta}(\mu_{\beta}) \leq \\
    &\inf_{\rho} N^{-\lambda(d+2)} \Bigg( \frac{1}{2}\mathcal{E}^{\neq}({\rm lemp}_{N}(Y_{N})+\rho - \mu_{\beta}^{N^{\lambda}}) -\\
    &  \int_{\mathbf{R}^{d}} \log(\mu_{\beta}^{N^{\lambda}}) d( \rho + {\rm lemp}_{N}(Y_{N}))+{\rm ent}[\rho] \Bigg) \leq \\
    &  N^{-\lambda(d+2)} \Bigg( \mathbf{T}^{N}_{\lambda} (\nu) + \int_{\mathbf{R}^{d}} \log(\mu_{\beta}^{N^{\lambda}}) d({\rm lemp}_{N}(Y_{N})) -C \eta \Bigg),
\end{split}    
\end{equation}
where the infimum is taken over $\rho$ such that
\begin{equation}
    \int_{\mathbf{R}^{d}} {\rm lemp}_{N} + \rho - \mu_{\beta}^{N^{\lambda}}=0,
\end{equation}
and $\mathbf{T}^{N}_{\lambda}, \mathbf{T}^{N, \neq}_{\lambda}$ are given by equations \eqref{defofTN} and \eqref{defofT} respectively. We have used that, if $\nu \in L^{\infty}$ then 
\begin{equation}
   \left| \mathbf{T}^{N}_{\lambda} (\nu) -\mathbf{T}^{N, \neq}_{\lambda} ({\rm lemp}_{N}(Y_{N})) \right| \leq C \eta,
\end{equation}
where $C$ depends on $\nu.$ The proof of this statement is the same as the proof that
\begin{equation}
    \left| \mathbf{L}^{*}(\nu) - \mathbf{L}^{*}_{\neq}({\rm lemp}_{N}(Y_{N})) \right| \leq C \eta, 
\end{equation}
where $C$ depends on $\nu$, see the proof of Lemma \ref{boundonZYN}, step 2.1 (in fact, in the critical regime, we have that $\mathbf{T}^{N}_{\lambda} = \mathbf{L}^{*}$ and $\mathbf{T}^{N, \neq}_{\lambda} = \mathbf{L}^{*}_{\neq}$).

Therefore we can rewrite equation \eqref{eq:critregime} as 
\begin{equation}
\begin{split}
    &\liminf_{N \to \infty} \frac{1}{\beta N^{2 - \lambda(d+2)}}\log\left(\mathbf{P}_{N, \beta} \left( {\rm lemp}_{N} \in  {B}(\nu, \epsilon) \right)\right)  \\
    &\geq \liminf_{N \to \infty} \frac{1}{\beta N^{2 - \lambda(d+2)}} \log \left( \int_{\Lambda_{\delta}^{\eta, \epsilon}} Z_{N,\beta}^{Y_{N}} dY_{N} \right)\\
    &  \geq \liminf_{N \to \infty}  \frac{1}{\beta N^{2 - \lambda(d+2)}} \log \Bigg( \int_{\Lambda_{\delta}^{\eta, \epsilon}} \exp \Bigg( -\beta N^{ 2-\lambda(d+2)} \Bigg( \mathbf{T}^{N}_{\lambda} (\nu) +C\eta+\\
    & \quad \int_{\mathbf{R}^{d}} \log(\mu_{\beta}^{N^{\lambda}}) d({\rm lemp}_{N}(Y_{N})) \Bigg) \Bigg) dY_{N} \Bigg) \\
     &=\liminf_{N \to \infty}   \frac{1}{\beta N^{2 - \lambda(d+2)}} \log \\
     &\quad \left( \int_{\Lambda_{\delta}^{\eta, \epsilon}} \exp \left( -\beta N^{2-\lambda(d+2)} \Bigg( \mathbf{T}^{N}_{\lambda} (\nu) +C\eta  \Bigg) \right) \Pi_{i=1}^{i_{N}} \mu_{\beta}^{N^{\lambda}}(y_{i}) d y_{i} \right) \\
     & =\liminf_{N \to \infty}  \frac{1}{\beta N^{2 - \lambda(d+2)}} \log \left( \exp \left( -\beta N^{2-\lambda(d+2)}  \mathbf{T}^{N}_{\lambda} (\nu)+C\eta  \right) \right)-\\
     &\quad \mbox{ent}[\nu| \mu_{V}(0) \mathbf{1}_{\square_{R}} ] - \delta \\
     & = \liminf_{N \to \infty} - \mathcal{T}^{N}_{\lambda} (\nu) - C(\delta + \eta ), 
\end{split}    
\end{equation}
where $C$ depends on $\nu$.

Since $\eta$ and $\delta$ are arbitrary, we have 
\begin{equation}
    \liminf_{N \to \infty} \left( \frac{1}{\beta N^{2 - \lambda(d+2)}}\log \left( \mathbf{P}_{N,\beta} ({\rm lemp}_{N} \in  {B}(\nu, \epsilon)) \right) + \mathcal{T}^{N}_{\lambda} (\nu) \right) \geq 0.
\end{equation}

In particular, this implies
\begin{equation}
 \lim_{\epsilon \to 0} \liminf_{N \to \infty} \Bigg(\frac{1}{\beta N^{2 - \lambda(d+2)}} \log \left(\mathbf{P}_{N, \beta} \left( {\rm lemp}_{N} \in  {B}(\nu, \epsilon) \right) \right) + \\
     \left(  \mathcal{T}^{N}_{\lambda} (\nu) \right) \Bigg) \geq 0. 
\end{equation}

\end{proof}     
We now turn to the proof of the $\limsup$ inequality:

\begin{proof}[Proof of $\limsup$ inequality]

We start with equation \eqref{formulaforpartitionfuncreescaled}, which in the critical regime $\gamma = \gamma^{*}$ reads
\begin{equation}
\begin{split}
    &-\frac{1}{N^{2} \beta}\log (  Z_{N, \beta}^{Y_{N}}) -  \mathcal{E}_{\beta}(\mu_{\beta})  \\
    =&N^{-\lambda(d+2)} \inf_{\rho} \Bigg(  \frac{1}{2}\mathcal{E}^{\neq}_{\square_{R}}({\rm lemp}_{N}(Y_{N})+\rho - \mu_{\beta}^{N^{\lambda}}) -\\
    &\quad \int_{\mathbf{R}^{d}} \log(\mu_{\beta}^{N^{\lambda}}) d( \rho + {\rm lemp}_{N}(Y_{N}))+{\rm ent}[\rho] +o_{N}(1) \Bigg)=\\
    &N^{-\lambda(d+2)} \left( \mathbf{T}^{N \neq }_{\lambda}({\rm lemp}_{N}(Y_{N})) + \int_{\mathbf{R}^{d}} \log(\mu_{\beta}^{N^{\lambda}}) d( {\rm lemp}_{N}(Y_{N})) +o_{N}(1) \right),
\end{split}    
\end{equation}
where the infimum in line $2$ of the last equation is taken over all $\rho \in \mathcal{M}^{+}(\mathbf{R}^{d} \setminus \square_{R})$ such that 
\begin{equation}
    |\rho| =  N^{\lambda d }\left( 1-\frac{i_{N}}{N} \right).
\end{equation}

We proceed by writing 
\begin{equation}
    \begin{split}
        &\mathbf{P}_{N,\beta} ({\rm lemp}_{N} \in  {B}(\nu, \epsilon)) = \\
        &\int_{{\rm lemp}_{N} \in  {B}(\nu, \epsilon)} Z_{N,\beta}^{Y_{N}} dY_{N} =\\
        &\int_{{\rm lemp}_{N} \in  {B}(\nu, \epsilon)} \exp \Bigg( -\beta N^{2-\lambda(d+2)}[ \mathbf{T}^{N \neq }_{\lambda}({\rm lemp}_{N}(Y_{N})) + \\
        &\quad \int \log(\mu_{\beta}^{N^{\lambda}}) d({\rm lemp}_{N}(Y_{N})) ] \Bigg) dY_{N} =\\
        &\int_{{\rm lemp}_{N} \in  {B}(\nu, \epsilon)} \exp \left( -\beta N^{2-\lambda(d+2)} \mathbf{T}^{N \neq }_{\lambda}({\rm lemp}_{N}(Y_{N})) \right) \Pi_{i=1}^{i_{N}} \mu_{\beta}^{N^{\lambda}}(y_{i}) dy_{i} \leq \\
        & \exp \left( -\beta N^{2-\lambda(d+2)} \inf_{\mu \in  {B}(\nu, \epsilon) \cap \mathcal{A}_{i_{N}}^{N^{\lambda d -1}}(\square_{R})} \mathbf{T}^{N,  \neq  }_{\lambda}(\mu) \right)\\
        &\quad \int_{{\rm lemp}_{N} \in  {B}(\nu, \epsilon)} \Pi_{i=1}^{i_{N}} \mu_{\beta}^{N^{\lambda}}(y_{i}) dy_{i} \leq \\
        & \exp \left( -\beta N^{2-\lambda(d+2)} \left[ \inf_{\mu \in  {B}(\nu, \epsilon)  \cap \mathcal{A}_{i_{N}}^{N^{\lambda d -1}}(\square_{R})} \mathbf{T}^{N  \neq }_{\lambda}(\mu) + \inf_{\mu \in  {B}(\nu, \epsilon) } \mbox{ent}[\mu| \mu_{\beta}^{N^{\lambda}}\mathbf{1}_{\square_{R}}] \right] \right).
    \end{split}
\end{equation}

Letting $N$ tend to $\infty$ and using Remark \ref{rem:unifconv} we have that 
\begin{equation}
\begin{split}
    \limsup_{N \to \infty} \Bigg(\frac{1}{\beta N^{2 - \lambda(d+2)}}\log \left(\mathbf{P}_{N, \beta} \left( {\rm lemp}_{N} \in  {B}(\nu, \epsilon) \right) \right) + \\
    \left(  \inf_{\mu \in  {B}(\nu, \epsilon)  \cap \mathcal{A}_{i_{N}}^{N^{\lambda d -1}}(\square_{R})} \mathbf{T}^{N, \neq}_{\lambda}(\mu) + \inf_{\mu \in  {B}(\nu, \epsilon)} \mbox{ent}[\mu| \mu_{V}(0)\mathbf{1}_{\square_{R}}]  \right) \Bigg) \leq 0.
\end{split}    
\end{equation}

It's well known that ent$[\nu|\mu]$ is l.s.c. in $\nu$ for fixed $\mu.$ Therefore
\begin{equation}
    \lim_{\epsilon \to 0}  \inf_{\mu \in  {B}(\nu, \epsilon)} \mbox{ent}[\mu| \mu_{V}(0)\mathbf{1}_{\square_{R}}] =  \mbox{ent}[\nu| \mu_{V}(0)\mathbf{1}_{\square_{R}}]. 
\end{equation}

We will also use a property of $\mathbf{T}^{N, \neq}_{\lambda},$ which we prove at the end of this section: we will show that
\begin{equation}\label{tehnicalproperty}
    \lim_{\epsilon \to 0} \lim_{N \to \infty} \left( \mathbf{T}^{N}_{\lambda}(\nu) - \inf_{\mu \in  {B}(\nu, \epsilon)  \cap \mathcal{A}_{i_{N}}^{N^{\lambda d -1}}(\square_{R})}  \mathbf{T}^{N, \neq }_{\lambda}(\mu) \right) =0.
\end{equation}

Using equation \eqref{tehnicalproperty}, we have
\begin{equation}
         \lim_{\epsilon \to 0} \limsup_{N \to \infty} \Bigg(\frac{1}{ \beta N^{2 - \lambda(d+2)}}\log \left(\mathbf{P}_{N, \beta} \left( {\rm lemp}_{N} \in  {B}(\nu, \epsilon) \right) \right) + \mathcal{T}^{N}_{\lambda} (\nu) \Bigg) \leq 0. 
\end{equation}
This concludes the proof.
\end{proof}

We now prove equation \eqref{tehnicalproperty}, used in the proof and restated here for convenience.

\begin{lemma}
Let $\nu$ be a measure on $\square_{R}$ such that $\mathcal{E}(\nu) < \infty$ and $\nu \in L^{\infty}$, and $i_{N}$ be such that
\begin{equation}
  \lim_{N \to \infty}  \frac{i_{N}}{N^{1 - \lambda d}} = |\nu|.
\end{equation}
Then
\begin{equation}
    \lim_{\epsilon \to 0} \lim_{N \to \infty} \left( \mathbf{T}^{N}_{\lambda}(\nu) - \inf_{\mu \in  {B}(\nu, \epsilon)  \cap \mathcal{A}_{i_{N}}^{N^{\lambda d -1}}(\square_{R})}  \mathbf{T}^{N, \neq }_{\lambda}(\mu) \right) =0.
\end{equation}
\end{lemma}

\begin{proof}
Let $\mu \in  {B}(\nu, \epsilon)  \cap \mathcal{A}_{i_{N}}^{N^{\lambda d -1}}(\square_{R})$, and let 
\begin{equation}
    \mu^{*} = \mu \ast \lambda_{N^{\lambda - \frac{1}{d}}},
\end{equation}
(see Remark \ref{def:lamb} for notation). We claim that 
\begin{equation}
    \mathbf{T}^{N}_{\lambda}(\mu^{*}) \leq  \mathbf{T}^{N, \neq }_{\lambda}(\mu) + C N^{2\left( \lambda - \frac{1}{d} \right)},
\end{equation}
where $C$ depends on $\nu$. To see this, let
\begin{equation}
    \rho^{*}=\\
    \operatorname{argmin}_{\rho \in \mathcal{M}^{+} ( \mathbf{R}^{d} \setminus \square_{R} )} \Bigg( \frac{1}{2}\mathcal{E}^{\neq}_{\square_{R}}\left( \mu + \rho - \mu_{\beta}^{N^{\lambda}} \right) -\int_{\mathbf{R}^{d}} \log\left( \mu_{\beta}^{N^{\lambda}} \right) d\rho + \mbox{ent}[\rho] \Bigg),
\end{equation}
where the minimum is taken over $\rho$ such that
\begin{equation}
    \int_{\mathbf{R}^{d}} \mu + \rho - \mu_{\beta}^{N^{\lambda}}=0.
\end{equation}

Then we can use $\rho^{*}$ as a test function in the definition of $\mathbf{T}^{N}_{\lambda}(\mu^{*})$ and get 
\begin{equation}
     \mathbf{T}^{N}_{\lambda}(\mu^{*}) \leq \frac{1}{2}\mathcal{E}\left( \mu^{*} + \rho^{*} - \mu_{\beta}^{N^{\lambda}} \right) -\int_{\mathbf{R}^{d}} \log\left( \mu_{\beta}^{N^{\lambda}} \right) d\rho^{*} + \mbox{ent}[\rho^{*}].
\end{equation}

Using Lemmas \ref{smearinglemma1}, \ref{smearinglemma2}, \ref{smearinglemma3}, \ref{smearinglemma4}, we have that
\begin{equation}
    \mathcal{E}\left( \mu^{*} + \rho^{*} - \mu_{\beta}^{N^{\lambda}} \right) \leq \mathcal{E}^{\neq}_{\square_{R}}\left( \mu + \rho^{*} - \mu_{\beta}^{N^{\lambda}} \right) + C N^{2\left( \lambda - \frac{1}{d} \right)},
\end{equation}
where $C$ depends on $\nu$. We, therefore, get that  
\begin{equation}
    \mathbf{T}^{N}_{\lambda}(\mu^{*}) \leq  \mathbf{T}^{N, \neq }_{\lambda}(\mu) + C N^{2\left( \lambda - \frac{1}{d} \right)},
\end{equation}
where $C$ depends on $\nu$. Note that 
\begin{equation}
    \lim_{N \to \infty} \| \mu - \mu^{*} \|_{BL} =0,
\end{equation}
therefore we are left with proving that
\begin{equation}
    \lim_{\epsilon \to 0} \lim_{N \to \infty} \left( \mathbf{T}^{N}_{\lambda}(\nu) - \inf_{\mu \in  {B}(\nu, \epsilon) } \left( \mathbf{T}^{N}_{\lambda}(\mu) \right) \right) =0.
\end{equation}

To see this, let 
\begin{equation}
   \mu_{N}^{\epsilon} =  \operatorname{argmin}_{\mu \in  {B}(\nu, \epsilon) }  \mathbf{T}^{N}_{\lambda}(\mu).
\end{equation}
We assume that the infimum is achieved for clarity of exposition. Otherwise, we would repeat the argument up to an arbitrarily small error. Let
\begin{equation}
    \rho^{N}_{\epsilon}=\\
    \operatorname{argmin}_{\rho \in \mathcal{M}^{+} ( \mathbf{R}^{d} \setminus \square_{R})} \Bigg( \frac{1}{2}\mathcal{E}\left( \mu_{N}^{\epsilon} + \rho - \mu_{\beta}^{N^{\lambda}} \right) -\int_{\mathbf{R}^{d}} \log\left( \mu_{\beta}^{N^{\lambda}} \right) d\rho + \mbox{ent}[\rho] \Bigg),
\end{equation}
where the minimum is taken over $\rho$ such that
\begin{equation}
    \int_{\mathbf{R}^{d}} \mu_{N}^{\epsilon} + \rho - \mu_{\beta}^{N^{\lambda}}=0.
\end{equation}

Then we can use $\rho^{N}_{\epsilon}$ as a test function in the definition of $\mathbf{T}^{N}_{\lambda}(\nu) $ and get
\begin{equation}
\begin{split}
    &\lim_{\epsilon \to 0} \lim_{N \to \infty} \left( \mathbf{T}^{N}_{\lambda}(\nu) - \inf_{\mu \in  {B}(\nu, \epsilon) } \left( \mathbf{T}^{N}_{\lambda}(\mu) \right) \right) \leq \\
    &\lim_{\epsilon \to 0} \lim_{N \to \infty} \frac{1}{2}\mathcal{E}\left( \nu + \rho^{N}_{\epsilon} - \mu_{\beta}^{N^{\lambda}} \right) - \frac{1}{2}\mathcal{E}\left( \mu_{N}^{\epsilon} + \rho^{N}_{\epsilon} - \mu_{\beta}^{N^{\lambda}} \right) =\\
    &  \lim_{\epsilon \to 0} \lim_{N \to \infty} \frac{1}{2}\mathcal{E}\left( \nu \right) - \frac{1}{2}\mathcal{E}\left( \mu_{N}^{\epsilon}\right) + \mathcal{G} \left( \nu - \mu_{N}^{\epsilon}, \rho^{N}_{\epsilon} - \mu_{\beta}^{N^{\lambda}}\right).
\end{split}    
\end{equation}

Note that as $\epsilon$ tends to $0$ and $N$ tends to $\infty$, $\mu_{N}^{\epsilon}$ converges weakly to $\nu$, therefore
\begin{equation}
    \lim_{\epsilon \to 0} \lim_{N \to \infty} \mathcal{E}\left( \nu \right) - \mathcal{E}\left( \mu_{N}^{\epsilon}\right) \leq 0,
\end{equation}
and 
\begin{equation}
    \lim_{\epsilon \to 0} \lim_{N \to \infty} \mathcal{G} \left( \nu - \mu_{N}^{\epsilon}, \rho^{N}_{\epsilon} - \mu_{\beta}^{N^{\lambda}}\right) =0.
\end{equation}
This implies that
\begin{equation}
    \lim_{\epsilon \to 0} \lim_{N \to \infty} \left( \mathbf{T}^{N}_{\lambda}(\nu) - \inf_{\mu \in  {B}(\nu, \epsilon) } \left( \mathbf{T}^{N}_{\lambda}(\mu) \right) \right) \leq 0,
\end{equation}
and since clearly 
\begin{equation}
    \lim_{\epsilon \to 0} \lim_{N \to \infty} \left( \mathbf{T}^{N}_{\lambda}(\nu) - \inf_{\mu \in  {B}(\nu, \epsilon) } \left( \mathbf{T}^{N}_{\lambda}(\mu) \right) \right) \geq 0,
\end{equation}
we conclude that
\begin{equation}
    \lim_{\epsilon \to 0} \lim_{N \to \infty} \left( \mathbf{T}^{N}_{\lambda}(\nu) - \inf_{\mu \in  {B}(\nu, \epsilon)  \cap \mathcal{A}_{i_{N}}^{N^{\lambda d -1}}(\square_{R})} \left( \mathbf{T}^{N, \neq }_{\lambda}(\mu) \right) \right) =0.
\end{equation}

\end{proof}

\section{Appendix A}

In this appendix, we prove some fundamental properties of the smearing technique and energy minimizers. Loosely speaking, the smearing technique consists in studying properties about ${\rm emp}_{N}$ by analyzing instead the more regular measure ${\rm emp}_{N} \ast \lambda_{\epsilon},$ where $\lambda_{\epsilon}$ is a measure that approximates a Dirac delta on a scale $\epsilon$.

We start by recalling a few facts about smearing and electric energy. These are standard and can be found, for example, in \cite{chafai2018concentration}, \cite{leble2017large1}, or \cite{rougerie2016higher}. The proof uses that $g$ is superharmonic in its domain, and harmonic away from $0.$

\begin{lemma}\label{smearinglemma1}
For every $x \in \mathbf{R}^{d}$ and $\epsilon>0$ we have that 
\begin{equation}\label{superharmoniceq1}
   \int_{\mathbf{R}^{d}} g(x+u) \, d\lambda_{\epsilon}(u)  \leq g(x)
\end{equation}
and also that 
\begin{equation}\label{superharmoniceq2}
    \iint_{\mathbf{R}^{d} \times \mathbf{R}^{d}} g(x+u-v) \, d\lambda_{\epsilon}(u) \, d \lambda_{\epsilon}(v) \leq g(x),
\end{equation}
(see Remark \ref{def:lamb} for notation). Furthermore, eqs \eqref{superharmoniceq1} and \eqref{superharmoniceq2} become an equality if $|x|>\epsilon.$
\end{lemma}
The next lemma can also be found in \cite{chafai2018concentration} (or verified by direct computation).

\begin{lemma}\label{smearinglemma2}
Let $\epsilon>0$, then for $d \geq 3$,
\begin{equation}
    \mathcal{E}(\lambda_{\epsilon})=g(\epsilon)    \mathcal{E}(\lambda_{1}).
\end{equation}
For $d=2$,
\begin{equation}
     \mathcal{E}(\lambda_{\epsilon})=g(\epsilon)+\mathcal{E}(\lambda_{1}).
\end{equation}
\end{lemma}

\begin{lemma}\label{smearinglemma3}
Let $\left\{x_{i} \right\}_{i=1}^{N} \in \mathbf{R}^{d},$  let $\phi=\frac{1}{N}\sum_{i=1}^{N} \delta_{x_{i}}$ and $\phi_{\epsilon}=\phi \ast \lambda_{\epsilon}.$ Then
\begin{equation}\label{approximatingenergybysmoothfunctions}
    \frac{1}{N^{2}}\sum_{i\neq j} g(x_{i}-x_{j}) \geq \mathcal{E}\left( \phi_{\epsilon} \right)-\frac{1}{N}g(\epsilon)\mathcal{E}(\lambda_{1}).
\end{equation}
Furthermore, eq. \eqref{approximatingenergybysmoothfunctions} is an equality if $\epsilon \leq \min\left\{ |x_{i}-x_{j}|\right\}.$
\end{lemma}

\begin{lemma}\label{smearinglemma4}
Let $\phi=\frac{1}{N}\sum_{i=1}^{N} \delta_{x_{i}}$ for $\left\{ x_{i} \right\}_{i=1}^{N} \in \mathbf{R}^{d}.$ Let $\phi_{\epsilon}=\phi \ast \lambda_{\epsilon}$ for $\epsilon>0.$ Let $\mu$ be a measure with an $L^{\infty}$ density. Then there exists $C>0,$ which depends only on $\|\mu \|_{L^{\infty}}$ such that
\begin{equation}\label{smearinglemma}
       \left| \mathcal{G}(P_{\epsilon}, \mu)  - \mathcal{G}(P, \mu) \right| \leq C \epsilon^{2}.
\end{equation}
\end{lemma}

\section{Appendix B}
\label{App:construction}

We will now prove Lemma \ref{goodenergygoodvolume}, which we restate here for convenience.

\begin{lemma}\label{goodenergygoodvolume2}
Let $\mu_{n}, \overline{\nu}$ be probability measures on a compact set $\Omega$ such that 
\begin{equation}
   \limsup_{n \to \infty} {\rm ent}[\mu_{n}]< \infty, \quad  {\rm ent}[\overline{\nu}]< \infty,
\end{equation}
$\overline{\nu} \in L^{\infty}(\Omega)$, and 
\begin{equation}
    \mathcal{E}(\overline{\nu}) < \infty.
\end{equation}
Assume that $\mu_{n}(x)$ is uniformly equi-continuous and bounded away from $0$ uniformly in $x$ and $n$.
Then for every $\epsilon, \delta, \eta, $ there exists a family of configurations
\begin{equation}
    \Lambda_{\delta}^{\eta, \epsilon} \subset \mathbf{R}^{d \times n}
\end{equation}
such that
\begin{itemize}
    \item[$\bullet$] \begin{equation}
    {\rm emp}_{n}(X_{n}) \in B(\overline{\nu}, \epsilon)
\end{equation}
for any $X_{n} \in  \Lambda_{\delta}^{\eta, \epsilon}.$
 \item[$\bullet$] 
 \begin{equation}
     \liminf_{n \to \infty} \frac{1}{n} \log \left( \int_{X_{n} \in  \Lambda_{\delta}^{\eta, \epsilon}} \Pi_{i=1}^{n} \mu_{n}(x_{i}) dX_{n} \right) \geq - \liminf_{n \to \infty}{\rm ent}[\overline{\nu}|\mu_{n}] - \delta. 
 \end{equation}
  \item[$\bullet$] 
  \begin{equation}\label{goodenergy2}
     \limsup_{n \to \infty} \left| \mathcal{E}^{\neq} ({\rm emp}_{n}(X_{n}) - \overline{\nu} )  \right| \leq \eta^{2}. 
  \end{equation}
  \item There exists $r>0$ such that
  \begin{equation}
      d(x_{i}, \partial \Omega) > r n^{-\frac{1}{d}} \quad {\rm and} \quad d(x_{i}, x_{j})> r n^{-\frac{1}{d}},
  \end{equation}
  for $i \neq j$.
\end{itemize}

\end{lemma}
\begin{proof}

\textbf{\textit{Step 1}}: Definition

First, we subdivide $\Omega$ into cubes $K_{j}$ of size $\overline{\eta}>0$ and center $x_{j},$ for $\overline{\eta}>0$ to be determined later. 

Let either
\begin{equation}
    n_{j} = \ceil*{n \overline{\nu}(K_{j})}
\end{equation}
or 
\begin{equation}
    n_{j} = \floor*{n \overline{\nu}(K_{j})},
\end{equation}
chosen so that 
\begin{equation}
    \sum_{j} n_{j} = n.
\end{equation}

The procedure for determining the point configuration of $n_{j}$ points is: $y_{1}$ is chosen at random from $K_{j}^{\tau},$ where $K_{j}^{\tau}$ is the cube $K_{j}$ minus a boundary layer of width $\tau$, $y_{2}$ is chosen at random from 
\begin{equation}
    K_{j}^{\tau} \setminus B(y_{1}, \tau).
\end{equation}
Then, for $i = 1...n_{j},$ the point $y_{i}$ is chosen at random from
\begin{equation}
    K_{j}^{\tau} \setminus \bigcup_{l=1}^{i-1} B(y_{l}, \tau).
\end{equation}

In other words, 
\begin{equation}
     \Lambda_{\delta}^{\eta, \epsilon} = \bigcup_{\sigma \in \text{sym}[1:n]} \bigotimes_{j} \bigotimes_{i=1}^{n_{j}} \left( K_{j}^{\tau} \setminus \bigcup_{ l=1}^{i-1} B(y_{\sigma(l)}, \tau) \right).
\end{equation}

We set $\tau = \alpha \overline{\eta} n_{j}^{-\frac{1}{d}},$ for some $\alpha \in (0,1)$ to be determined later. For $\alpha$ small enough, the procedure is well defined, in the sense that it is possible to choose $n_{j}$ points in this way. 

For $\overline{\eta}$ small enough, any $X_{n} \in \Lambda_{\delta}^{\eta, \epsilon} $ satisfies 
\begin{equation}
    {\rm emp}_{n}(X_{n}) \in B(\overline{\nu}, \epsilon).
\end{equation}

We immediately get that $d(x_{i}, \partial \Omega) > r n^{-\frac{1}{d}}, d(x_{i}, x_{j})> r n^{-\frac{1}{d}}$ for some $r>0$. We now prove that these configurations have the right volume and energy.

\textbf{\textit{Step 2}}: Volume Estimate

To give intuition, we first treat the case $\mu_{n}$ as  the uniform measure on $\Omega$. In this case, we have
\begin{equation}
    \begin{split}
        \mu_{n}^{\otimes n} (\Lambda_{\delta}^{\eta, \epsilon} ) &= \frac{n!}{\Pi_{i} n_{i}!} \Pi_{j} \Pi_{p=1}^{n_{j}-1} (\overline{\eta}^{d} - k_{d}\overline{\eta}^{d-1}\tau - c_{d} p \tau^{d}) \\
        &= \frac{n!}{\Pi_{i} n_{i}!} \Pi_{j} \overline{\eta}^{d n_{j}} \Pi_{p=1}^{n_{j}-1} (1 - \frac{\tau k_{d}}{\overline{\eta}} - \frac{c_{d} p \alpha^{d}}{n_{j}}), 
    \end{split}
\end{equation}
where $c_{d}, k_{d}$ are constants which depend only on $d$. On the other hand, the volume of all configurations with exactly $n_{j}$ points in cube $K_{j}$ is given by
\begin{equation}
   \frac{n!}{\Pi_{i} n_{i}!} \Pi_{j} \overline{\eta}^{d n_{j}}.
\end{equation}

By Sanov's theorem, we have that 
\begin{equation}
    \frac{n!}{\Pi_{i} n_{i}!} \Pi_{j} \overline{\eta}^{d n_{j}} = \exp(-n [{\rm ent}[\overline{\nu}| \mu_{n}] + o_{n}(1)]).
\end{equation}

For a general $\mu_{n},$ we have that the volume of all configurations with exactly $n_{j}$ points in cube $K_{j}$ is given by
\begin{equation}
    \frac{n!}{\Pi_{i} n_{i}!} \Pi_{j} [\mu_{n}(K_{j})]^{n_{j}},
\end{equation}
and that by Sanov's theorem
\begin{equation}
    \frac{n!}{\Pi_{i} n_{i}!} \Pi_{j} [\mu_{n}(K_{j})]^{n_{j}} = \exp(-n [{\rm ent}[\overline{\nu}| \mu_{n}] + o_{n}(1)]).
\end{equation}

On the other hand, we can estimate 
\begin{equation}
    \begin{split}
        \log \left( \Pi_{j}  \Pi_{p=1}^{n_{j}-1} (1 - \frac{ k_{d} \tau}{\overline{\eta}} - \frac{c_{d} p \alpha^{d}}{n_{j}})   \right) &=  \sum_{j}  \sum_{p=1}^{n_{j}-1} \log \left(1 - \frac{ k_{d}\tau}{\overline{\eta}} - \frac{c_{d} p \alpha^{d}}{n_{j}}\right) \\
        &\leq  \alpha  k_{d} \sum_{j} n_{j}^{1-\frac{1}{d}} + c_{d} \alpha^{d} \sum_{j} n_{j} \\
        & \leq C \alpha n,
    \end{split}
\end{equation}

where $C$ depends on $\overline{\nu}$. Using the hypothesis that $\mu_{n}$ is uniformly equi-continuous, we have that for any any $\overline{\delta} > 0$ there exists $\overline{\eta}^{*}$ such that if $\overline{\eta} < \overline{\eta}^{*}$ we have
\begin{equation}
    \frac{\mu_{n}(x)}{\mu_{n}(y)} \in (1-\overline{\delta}, 1+\overline{\delta})
\end{equation}
for any $x,y \in \Omega$.

Hence, we have 
\begin{equation}
    \begin{split}
        &\log \left( \mu_{n}^{\otimes n} (\Lambda_{\delta}^{\eta, \epsilon} ) \right) \geq \\
        &\log\left( \frac{n!}{\Pi_{i} n_{i}!} \Pi_{j} [\mu_{n}(K_{j})]^{n_{j}} \right) - \log \left( \Pi_{j}  \Pi_{p=1}^{n_{j}-1} (1 - \frac{\tau}{\overline{\eta}} - \frac{c_{d} p \alpha^{d}}{n_{j}})   \right) - o_{n}(n) =\\
        & -n \left({\rm ent}[\overline{\nu}| \mu_{n}] - C \alpha - o_{n}(1) - o_{\overline{\eta}}(1) \right).
    \end{split}
\end{equation}

\textbf{\textit{Step 3}}: Energy Estimate

The idea for the energy estimate will be to prove that
\begin{equation}
    h^{{\rm emp}_{n}-\overline{\nu}}
\end{equation}
is typically pointwise small. Then the smallness of the energy will be a consequence of the finite mass of the measures $\overline{\nu}$ and ${\rm emp}_{n}.$

Let $x \in K_{i}.$ Then we can write
\begin{equation}\label{breakdownoftheenergy}
     h^{{\rm emp}_{n}-\overline{\nu}} (x) = \int_{K_{i}} g(x-y) d ({\rm emp}_{n}-\overline{\nu})(y) + \sum_{j \neq i} \int_{  K_{j}} g(x-y) d ({\rm emp}_{n}-\overline{\nu})(y).  
\end{equation}

For any $j \neq i,$ note that the minimum distance from $x$ to $K_{j}$ is given by $|x - x_{i}| -c \overline{\eta}$ and the maximum distance from $x$ to $K_{i}$ is given by $|x - x_{i}| + c \overline{\eta}$, for some $c$ which depends on $d$ and $x$. For the rest of the proof, we assume w.l.o.g. that
\begin{equation}
    {\rm emp}_{n}(K_{j}) \geq \overline{\nu} (K_{j}),
\end{equation}
then
\begin{equation}
    \begin{split}
      &\left|  \int_{  K_{j}} g(x-y) d ({\rm emp}_{n}-\overline{\nu})(y) \right| \leq \\
      &\left| \frac{{\rm emp}_{n}(K_{i})}{(|x - x_{i}| -c \overline{\eta})^{d-2}}- \frac{\overline{\nu}(K_{i})}{(|x - x_{i}| +c \overline{\eta})^{d-2}} \right| =\\
        & \left| \frac{{\rm emp}_{n}(K_{i})}{(|x - x_{i}| -c \overline{\eta})^{d-2}}- \frac{\overline{\nu}(K_{i})}{(|x - x_{i}| -c \overline{\eta})^{d-2}} + \frac{\overline{\nu}(K_{i})}{(|x - x_{i}| -c \overline{\eta})^{d-2}} - \frac{\overline{\nu}(K_{i})}{(|x - x_{i}| +c \overline{\eta})^{d-2}} \right| \leq \\
        & \left| \frac{({\rm emp}_{n} - \overline{\nu})(K_{i})}{(|x - x_{i}| -c \overline{\eta})^{d-2}} \right| + \left|  C \overline{\eta} \frac{ \overline{\nu}(K_{i})}{|x - x_{i}| ^{d-1}} \right|,
    \end{split}
\end{equation}
for some absolute constant $C$. 

Using the hypothesis that $\overline{\nu} \in L^{\infty}(\Omega)$ we get
\begin{equation}
    |({\rm emp}_{n} - \overline{\nu})(K_{j})| \leq \frac{C}{n},
\end{equation}
where $C$ depends on $\| \overline{\nu} \|_{L^{\infty}}$. Since $\frac{1}{|x|^{d-2}}$ is integrable at the origin and $\Omega$ is compact, we have 
\begin{equation}
     \sum_{ j \neq i} \left| \frac{({\rm emp}_{n} - \overline{\nu})(K_{i})}{(|x - x_{i}| -c \overline{\eta})^{d-2}} \right| \leq  \frac{C}{n \overline{\eta}^{d}},
\end{equation}
where $C$ depends on $\| \overline{\nu} \|_{L^{\infty}}$ and $\Omega$.

Using again the hypothesis that $\overline{\nu} \in L^{\infty}$ we have 
\begin{equation}
    \begin{split}
        \sum_{ j \neq i}  \left|   \overline{\eta} \frac{ \overline{\nu}(K_{i})}{|x - x_{i}|^{d-1}} \right| &\leq C\overline{\eta} \int_{\Omega} \frac{1}{|x|^{d-1}} d, \\
         & \leq C \overline{\eta},
    \end{split}
\end{equation}
where $C$ depends on $\| \overline{\nu} \|_{L^{\infty}}$ and $\Omega$.

For the second term in equation \eqref{breakdownoftheenergy} term, we will instead work with 
\begin{equation}
    {\rm emp}_{n}^{*} = {\rm emp}_{n} \ast \lambda_{\frac{\tau}{2}},
\end{equation}
where $\tau = \alpha \overline{\eta} n_{j}^{-\frac{1}{d}},$ for some $\alpha \in (0,1)$ to be determined later (see Remark \ref{def:lamb} for notation). Note that by Lemma \ref{smearinglemma1}, and because $d(x_{i}, x_{j}) \geq r n^{-\frac{1}{d}}$ we have
\begin{equation}
     \sum_{j \neq i} \int_{  K_{j}} g(x-y) d ({\rm emp}_{n}-\overline{\nu})(y) =  \sum_{j \neq i} \int_{  K_{j}} g(x-y) d ({\rm emp}_{n}^{*}-\overline{\nu})(y).
\end{equation}
Note also that
\begin{equation}
    \| {\rm emp}_{n}^{*} \|_{L^{\infty}} \leq c_{\alpha, \overline{\nu}},
\end{equation}
where $c_{\alpha, \overline{\nu}}$ is a constant that depends on $\alpha$ and $\| \overline{\nu} \|_{L^{\infty}}$. Hence
\begin{equation}
    \begin{split}
        \left| \int_{K_{i}} g(x-y) d ({\rm emp}_{n}^{*} - \overline{\nu})(y) \right| & \leq c_{\alpha, \overline{\nu}} \int_{K_{i}} \frac{1}{|x|^{d-2}} dx \\
        &\leq c_{\alpha, \overline{\nu}} \overline{\eta}^{2},
    \end{split}
\end{equation}
where $c_{\alpha, \overline{\nu}}$ is a (new) constant that depends on $\alpha$ and $\| \overline{\nu} \|_{L^{\infty}}$.

Putting everything together, we get 
\begin{equation}
    |h^{{\rm emp}^{*}_{n}-\overline{\nu}}| \leq \frac{C}{n \overline{\eta}^{d}} + C \overline{\eta} + c_{\alpha, \overline{\nu}} \overline{\eta}^{2},
\end{equation}
where $C$ depends on $\overline{\nu}$ and $\Omega$ and $c_{\alpha, \overline{\nu}}$ depends, in addition, on $\alpha$. 

Hence
\begin{equation}
    \begin{split}
        &\iint_{\Omega \times \Omega} g(x-y) d ({\rm emp}_{n}^{*} - \overline{\nu})(x) d ({\rm emp}_{n}^{*} - \overline{\nu})(y) \leq \\
        & \| h^{{\rm emp}^{*}_{n}-\overline{\nu}}\|_{L^{\infty}} \|{\rm emp}_{n}^{*} - \overline{\nu}\|_{TV} \leq \\
        & \frac{C}{n \overline{\eta}^{d}} + C \overline{\eta} + c_{\alpha, \overline{\nu}} \overline{\eta}^{2}.
    \end{split}
\end{equation}

Making $\overline{\eta}$ small enough after having chosen $\alpha,$ while keeping $\overline{\eta} >> n^{-\frac{1}{d}},$ we have that for any $\eta >0$ we can find parameters such that
\begin{equation}
     \limsup_{n \to \infty} \left| \mathcal{E}({\rm emp}_{n}^{*}(X_{n}) - \overline{\nu} )  \right| \leq \eta^{2}, 
\end{equation}
  which implies that
\begin{equation}
     \limsup_{n \to \infty} \left| \mathcal{E}^{\neq} ({\rm emp}_{n}(X_{n}) - \overline{\nu} )  \right| \leq \eta^{2}. 
\end{equation}
\end{proof}

%

\section{Acknowledgements}

I thank Sylvia Serfaty for her guidance during this project. I thank Ofer Zeitouni and Thomas Leblé for useful conversations.

\bibliographystyle{siam}
\bibliography{bibliography.bib}

\begin{thebibliography}{10}

\bibitem{armstrong2019local}
{\sc S.~Armstrong and S.~Serfaty}, {\em Local laws and rigidity for {C}oulomb
  gases at any temperature}, Annals of Probability, in press-arXiv preprint
  arXiv:1906.09848,  (2019).

\bibitem{armstrong2019thermal}
\leavevmode\vrule height 2pt depth -1.6pt width 23pt, {\em Thermal
  approximation of the equilibrium measure and obstacle problem}, arXiv
  preprint arXiv:1912.13018,  (2019).

\bibitem{arous1997large}
{\sc G.~B. Arous and A.~Guionnet}, {\em Large deviations for wigner's law and
  voiculescu's non-commutative entropy}, Probability theory and related fields,
  108 (1997), pp.~517--542.

\bibitem{bauerschmidt2017local}
{\sc R.~Bauerschmidt, P.~Bourgade, M.~Nikula, and H.-T. Yau}, {\em Local
  density for two-dimensional one-component plasma}, Communications in
  Mathematical Physics, 356 (2017), pp.~189--230.

\bibitem{bauerschmidt2019two}
\leavevmode\vrule height 2pt depth -1.6pt width 23pt, {\em The two-dimensional
  coulomb plasma: quasi-free approximation and central limit theorem}, Advances
  in Theoretical and Mathematical Physics, 23 (2019), pp.~841--1002.

\bibitem{bekerman2018clt}
{\sc F.~Bekerman, T.~Lebl{\'e}, S.~Serfaty, et~al.}, {\em Clt for fluctuations
  of $\beta$-ensembles with general potential}, Electronic Journal of
  Probability, 23 (2018).

\bibitem{ben1998large}
{\sc G.~Ben~Arous and O.~Zeitouni}, {\em Large deviations from the circular
  law}, ESAIM: Probability and Statistics, 2 (1998), pp.~123--134.

\bibitem{bodineau1999stationary}
{\sc T.~Bodineau and A.~Guionnet}, {\em About the stationary states of vortex
  systems}, Annales de l'Institut Henri Poincare (B) Probability and
  Statistics, 35 (1999), pp.~205--237.

\bibitem{borodin2019random}
{\sc A.~Borodin, I.~Corwin, and A.~Guionnet}, {\em Random matrices}, in
  IAS/Park City Mathematics Series, vol.~26, American Mathematical Soc., 2019.

\bibitem{borot2013asymptotic}
{\sc G.~Borot and A.~Guionnet}, {\em Asymptotic expansion of $\beta$ matrix
  models in the one-cut regime}, Communications in Mathematical Physics, 317
  (2013), pp.~447--483.

\bibitem{bourgade2012bulkb}
{\sc P.~Bourgade, L.~Erd{\H{o}}s, and H.-T. Yau}, {\em Bulk universality of
  general $\beta$-ensembles with non-convex potential}, Journal of mathematical
  physics, 53 (2012), p.~095221.

\bibitem{bourgade2014edge}
{\sc P.~Bourgade, L.~Erd{\"o}s, and H.-T. Yau}, {\em Edge universality of beta
  ensembles}, Communications in Mathematical Physics, 332 (2014), pp.~261--353.

\bibitem{bourgade2014universality}
{\sc P.~Bourgade, L.~Erd{\H{o}}s, H.-T. Yau, et~al.}, {\em Universality of
  general $\beta$-ensembles}, Duke Mathematical Journal, 163 (2014),
  pp.~1127--1190.

\bibitem{bourgade2014local}
{\sc P.~Bourgade, H.-T. Yau, and J.~Yin}, {\em Local circular law for random
  matrices}, Probability Theory and Related Fields, 159 (2014), pp.~545--595.

\bibitem{chafai2014first}
{\sc D.~Chafa{\"\i}, N.~Gozlan, P.-A. Zitt, et~al.}, {\em First-order global
  asymptotics for confined particles with singular pair repulsion}, The Annals
  of Applied Probability, 24 (2014), pp.~2371--2413.

\bibitem{chafai2018concentration}
{\sc D.~Chafai, A.~Hardy, and M.~Ma{\"\i}da}, {\em Concentration for coulomb
  gases and coulomb transport inequalities}, Journal of Functional Analysis,
  275 (2018), pp.~1447--1483.

\bibitem{garcia2019large}
{\sc D.~Garc{\'\i}a-Zelada}, {\em A large deviation principle for empirical
  measures on polish spaces: Application to singular gibbs measures on
  manifolds}, in Annales de l'Institut Henri Poincar{\'e}, Probabilit{\'e}s et
  Statistiques, vol.~55, Institut Henri Poincar{\'e}, 2019, pp.~1377--1401.

\bibitem{hardin2018large}
{\sc D.~P. Hardin, T.~Lebl{\'e}, E.~B. Saff, and S.~Serfaty}, {\em Large
  deviation principles for hypersingular riesz gases}, Constructive
  Approximation, 48 (2018), pp.~61--100.

\bibitem{hardy2021clt}
{\sc A.~Hardy and G.~Lambert}, {\em Clt for circular beta-ensembles at high
  temperature}, Journal of Functional Analysis, 280 (2021), p.~108869.

\bibitem{johansson1998fluctuations}
{\sc K.~Johansson et~al.}, {\em On fluctuations of eigenvalues of random
  hermitian matrices}, Duke mathematical journal, 91 (1998), pp.~151--204.

\bibitem{lambert2019quantitative}
{\sc G.~Lambert, M.~Ledoux, C.~Webb, et~al.}, {\em Quantitative normal
  approximation of linear statistics of $\beta $-ensembles}, The Annals of
  Probability, 47 (2019), pp.~2619--2685.

\bibitem{leble2017large1}
{\sc T.~Lebl{\'e} and S.~Serfaty}, {\em Large deviation principle for empirical
  fields of log and riesz gases}, Inventiones mathematicae, 210 (2017),
  pp.~645--757.

\bibitem{leble2018fluctuations}
\leavevmode\vrule height 2pt depth -1.6pt width 23pt, {\em Fluctuations of two
  dimensional coulomb gases}, Geometric and Functional Analysis, 28 (2018),
  pp.~443--508.

\bibitem{leble2017large2}
{\sc T.~Lebl{\'e}, S.~Serfaty, and O.~Zeitouni}, {\em Large deviations for the
  two-dimensional two-component plasma}, Communications in Mathematical
  Physics, 350 (2017), pp.~301--360.

\bibitem{padilla2023concentration}
{\sc D.~Padilla-Garza}, {\em Concentration inequality around the thermal
  equilibrium measure of coulomb gases}, Journal of Functional Analysis, 284
  (2023), p.~109733.

\bibitem{petz1998logarithmic}
{\sc D.~Petz and F.~Hiai}, {\em Logarithmic energy as an entropy functional},
  Contemporary Mathematics, 217 (1998), pp.~205--221.

\bibitem{rougerie2016higher}
{\sc N.~Rougerie and S.~Serfaty}, {\em Higher-dimensional coulomb gases and
  renormalized energy functionals}, Communications on Pure and Applied
  Mathematics, 69 (2016), pp.~519--605.

\bibitem{sandier20152d}
{\sc E.~Sandier, S.~Serfaty, et~al.}, {\em 2d coulomb gases and the
  renormalized energy}, The Annals of Probability, 43 (2015), pp.~2026--2083.

\bibitem{serfaty2017microscopic}
{\sc S.~Serfaty}, {\em Microscopic description of log and coulomb gases}, arXiv
  preprint arXiv:1709.04089,  (2017).

\bibitem{serfaty2018systems}
\leavevmode\vrule height 2pt depth -1.6pt width 23pt, {\em Systems of points
  with coulomb interactions}, in Proceedings of the International Congress of
  Mathematicians: Rio de Janeiro 2018, World Scientific, 2018, pp.~935--977.

\bibitem{serfaty2020gaussian}
\leavevmode\vrule height 2pt depth -1.6pt width 23pt, {\em Gaussian
  fluctuations and free energy expansion for 2d and 3d coulomb gases at any
  temperature}, arXiv preprint arXiv:2003.11704,  (2020).

\bibitem{shcherbina2013fluctuations}
{\sc M.~Shcherbina}, {\em Fluctuations of linear eigenvalue statistics of
  $\beta$ matrix models in the multi-cut regime}, Journal of Statistical
  Physics, 151 (2013), pp.~1004--1034.

\end{thebibliography}

\end{document}